\newtheorem{theorem}{Theorem}
\newtheorem{lemma}[theorem]{Lemma}
\newtheorem{definition}[theorem]{Definition}
\newtheorem{corollary}[theorem]{Corollary}
\newtheorem{proposition}[theorem]{Proposition}
\theoremstyle{definition}
\newcommand{\Sym}{\mathrm{Sym}}
\newcommand{\Inj}{\mathrm{Inj}}
\newcommand{\Fin}{\mathrm{Fin}}
\newcommand{\Alt}{\mathrm{Alt}}
\newcommand{\Gen}{\mathrm{Gn}}
\newcommand{\C}{\mathrm{C}}
\newcommand{\Z}{\mathbb{Z}}
\newcommand{\N}{\mathbb{N}}
\begin{document}

\title{Monoids of injective maps closed under conjugation by permutations\thanks{2000
Mathematics Subject Classification numbers: 20M20, 20B30.}}

\author{Zachary Mesyan\thanks{This work was done while the author was supported by a Postdoctoral Fellowship from the Center for Advanced Studies in Mathematics at Ben Gurion University, a Vatat Fellowship from the Israeli Council for Higher Education, and ISF grant 888/07.}}

\maketitle

\begin{abstract}
Let $\Omega$ be a countably infinite set, $\Inj(\Omega)$ the
monoid of all injective endomaps of $\Omega$, and $\Sym(\Omega)$
the group of all permutations of $\Omega.$ We classify all
submonoids of $\Inj(\Omega)$ that are closed under conjugation by
elements of $\Sym(\Omega).$
\end{abstract}

\section{Introduction}
Let $\Omega$ be a countably infinite set and $\Sym(\Omega)$ the
group of all permutations of $\Omega.$ In 1933 Schreier and
Ulam~\cite{S&U} showed that $\Sym(\Omega)$ has precisely four
normal subgroups. This result was then generalized by
Baer~\cite{Baer} to arbitrary sets in place of $\Omega.$ (See
also~\cite{Bertram1} and~\cite{D&G} for other related results.) In
this paper we generalize the Schreier-Ulam Theorem in a different
direction, by classifying all the normal submonoids (i.e., ones that are closed under conjugation by elements of $\Sym(\Omega)$) of $\Inj(\Omega),$ the monoid of all injective endomaps of $\Omega.$  Unlike normal subgroups of $\Sym(\Omega),$ there are infinitely many (in fact, $2^{\aleph_0}$) normal submonoids of $\Inj(\Omega)$. However, it is possible to describe them.

Given a normal submonoid $M \subseteq \Inj(\Omega),$ our strategy will be to split $M$ into the smaller semigroups $M_{\mathrm{gp}}$
(consisting of the permutations in $M$), $M_{\mathrm{fin}}$
(consisting of the elements $f \in M$ satisfying $1 \leq |\Omega
\setminus (\Omega)f| < \aleph_0$), and $M_\infty$ (consisting of
the elements $f \in M$ such that $|\Omega \setminus (\Omega)f| =
\aleph_0$). We shall then describe these three subsemigroups (each
of which is also normal) individually. Even though our definition does not explicitly say that $M_{\mathrm{gp}}$ is closed under taking inverses, it turns out that this must be the case, and hence that $M_{\mathrm{gp}}$ must always be one of the four groups mentioned in the Schreier-Ulam Theorem. Further, $M_\infty$ must either be empty or contain every element $f \in \Inj(\Omega)$ satisfying $|\Omega \setminus (\Omega)f| = \aleph_0.$ The semigroup $M_{\mathrm{fin}}$ is more difficult to describe, and its structure depends on that of $M_{\mathrm{gp}}.$ But, roughly speaking, $M_{\mathrm{fin}}$ must either be of the form $$\{f \in \Inj(\Omega) : |\Omega \setminus (\Omega)f| \in N \setminus
\{0\}\},$$ where $N$ is an additive submonoid of the natural numbers, or be a slightly smaller subsemigroup of such a semigroup.

The main tool used in proving the result described above is the
following theorem from~\cite{ZM}: given any three maps $f, g, h
\in \Inj(\Omega)\setminus \Sym(\Omega),$ there exist permutations
$a, b \in \Sym(\Omega)$ such that $h = afa^{-1}bgb^{-1}$ if and
only if $$|\Omega \setminus (\Omega)f| + |\Omega \setminus
(\Omega)g| = |\Omega \setminus (\Omega)h|.$$

\subsection*{Acknowledgements}  

The author is grateful to George Bergman for very helpful comments on an earlier draft of this note, and to the referee for suggesting ways to improve the notation.

\section{Conjugation basics}

We begin with some basic definitions and facts about conjugation of injective set maps. The results in this section are all easy and are discussed in detail in~\cite{ZM}, so we omit their proofs here.

Let $\Omega$ be an arbitrary infinite set, $\Inj(\Omega)$ the
monoid of all injective endomaps of $\Omega$, and $\Sym(\Omega)$
the group of all permutations of $\Omega.$ We shall write set maps
on the right of their arguments. The set of integers will be
denoted by $\Z,$ the set of positive integers will be denoted by
$\Z_+,$ the set of nonnegative integers will be denoted by $\N,$
and the cardinality of a set $\Sigma$ will be denoted by
$|\Sigma|$.

\begin{definition}
Let $f \in \Inj(\Omega)$ be any element, and let $\, \Sigma \subseteq
\Omega$ be a nonempty subset. We shall say that $\, \Sigma$ is a {\em
cycle} under $f$ if the following two conditions are satisfied:
\begin{enumerate}
\item[$(${\rm i}$)$] for all $\alpha \in \Omega,$ $(\alpha)f
\in \Sigma$ if and only if $\alpha \in \Sigma;$
\item[$(${\rm ii}$)$] $\Sigma$ has no proper nonempty subset satisfying
 $(${\rm i}$)$.
\end{enumerate}
We shall say that $\, \Sigma$ is a {\em forward cycle} under $f$
if $\, \Sigma$ is an infinite cycle under $f$ and $\Sigma \setminus (\Omega)f \neq \emptyset.$ If $\, \Sigma$ is an infinite cycle under
$f$ that is not a forward cycle, we shall refer to it as an {\em
open cycle}.
\end{definition}

It is easy to see that for any $\alpha \in \Omega,$ the set
$$\{(\alpha)f^n : n \in \N\}\cup \{\beta \in \Omega : \exists n \in \Z_+ \,
((\beta)f^n = \alpha)\}$$ is a cycle under $f.$ By condition (ii)
above, it follows that every cycle of $f$ is of this form. This
also implies that that every $\alpha \in \Omega$ falls into
exactly one cycle under $f.$ Thus, we can define a collection
$\{\Sigma_i\}_{i\in I}$ of disjoint subsets of $\Omega$ to be a
{\em cycle decomposition} of $f$ if each $\Sigma_i$ is a cycle
under $f$ and $\bigcup_{i\in I} \Sigma_i = \Omega.$ We note that
$f$ can have only one cycle decomposition, up to reindexing the
cycles. For convenience, we shall therefore at times refer to {\em
the} cycle decomposition of $f.$

\begin{definition}
Let $f,g \in \Inj(\Omega)$ be any two elements. We shall say that
$f$ and $g$ have {\em equivalent} cycle decompositions if there
exist an indexing set $I$ and cycle decompositions $\,
\{\Sigma_i\}_{i\in I}$ and $\, \{\Gamma_i\}_{i\in I}$ of $f$ and
$g,$ respectively, that satisfy the following two conditions:
\begin{enumerate}
\item[$(${\rm i}$)$] for all $i \in I,$ $|\Sigma_i| = |\Gamma_i|;$
\item[$(${\rm ii}$)$] if $\, |\Sigma_i| = |\Gamma_i| = \aleph_0$ for some $i \in I,$ then $\, \Sigma_i$ is a forward cycle under $f$ if and only if $\,
\Gamma_i$ is a forward cycle under $g.$
\end{enumerate}
\end{definition}

As with permutations, we have the following fact.

\begin{proposition}[{\rm \cite[Proposition~3]{ZM}}]\label{decomposition}
Let $f,g \in \Inj(\Omega)$ be any two maps. Then $g = afa^{-1}$
for some $a \in \Sym(\Omega)$ if and only if $f$ and $g$ have
equivalent cycle decompositions.
\end{proposition}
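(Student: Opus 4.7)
The plan is to prove both directions by exploiting the explicit form of cycles described after the first definition: every cycle under $f$ is the two-sided $f$-orbit of one of its elements.

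For the forward direction, suppose $g = afa^{-1}$ for some $a \in \Sym(\Omega)$. I would show that if $\{\Sigma_i\}_{i \in I}$ is the cycle decomposition of $f$, then $\{(\Sigma_i)a^{-1}\}_{i \in I}$ is the cycle decomposition of $g$. The verification of condition (i) in the definition of a cycle is a direct computation: $(\alpha)g \in (\Sigma_i)a^{-1}$ iff $(\alpha)afa^{-1} \in (\Sigma_i)a^{-1}$ iff $(\alpha)af \in \Sigma_i$ iff $(\alpha)a \in \Sigma_i$ iff $\alpha \in (\Sigma_i)a^{-1}$; minimality (condition (ii)) follows by pulling a hypothetical smaller invariant set back through $a$. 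This gives equivalent decompositions with the obvious indexing. Cardinalities match because $a$ is a bijection. For the forward-cycle condition, I would first note $(\Omega)g = ((\Omega)f)a^{-1}$, so that $(\Sigma_i)a^{-1} \setminus (\Omega)g = (\Sigma_i \setminus (\Omega)f)a^{-1}$; hence one side is nonempty iff the other is, which gives the forward/open dichotomy.

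For the converse, suppose $\{\Sigma_i\}_{i \in I}$ and $\{\Gamma_i\}_{i \in I}$ are equivalent cycle decompositions of $f$ and $g$. The idea is to build $a$ cycle-by-cycle as a bijection $\Gamma_i \to \Sigma_i$ that intertwines the dynamics, i.e. such that $(\gamma)ga = (\gamma)af$ for every $\gamma \in \Gamma_i$; the disjoint union of these local bijections is the desired $a$. The construction splits by cycle type. For a finite cycle, list its elements as $\gamma_0, \gamma_1, \ldots, \gamma_{n-1}$ and $\sigma_0, \sigma_1, \ldots, \sigma_{n-1}$ cyclically under $g$ and $f$ respectively, and send $\gamma_j \mapsto \sigma_j$. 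For a forward cycle, use the distinguished element outside the image as the starting point in both $\Gamma_i$ and $\Sigma_i$ (this requires condition (ii) of the definition of equivalent cycle decompositions to line up on both sides) and match $\gamma_j \mapsto \sigma_j$ for $j \in \N$. For an open cycle, pick any element as a basepoint in each and match $\gamma_j \mapsto \sigma_j$ for $j \in \Z$; the choice of basepoint is irrelevant because the intertwining equation then holds on all of $\Z$.

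The main technical point, rather than an obstacle, is checking that the intertwining equation really does hold on each cycle, and in particular that the forward-cycle case works: without the condition that forward cycles of $f$ correspond to forward cycles of $g$, one could not simultaneously exhaust $\Gamma_i$ and $\Sigma_i$ using $\N$-indexed sequences in a way compatible with the dynamics, because the starting points are forced. A secondary bookkeeping point is that since the $\Gamma_i$ partition $\Omega$ and so do the $\Sigma_i$, the global $a$ is automatically a permutation of $\Omega$; and since the intertwining relation holds on each $\Gamma_i$ and each element of $\Omega$ lies in exactly one, the equation $g = afa^{-1}$ holds globally.
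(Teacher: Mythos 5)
Your proof is correct. The paper itself omits the argument, quoting this as Proposition~3 of the cited reference \cite{ZM}, but what you give is precisely the standard proof one would expect there: conjugation transports cycles via $\Sigma_i \mapsto (\Sigma_i)a^{-1}$ and preserves cardinality and the forward/open distinction, while conversely an intertwining bijection $a$ is assembled cycle-by-cycle (indexing finite cycles cyclically, forward cycles by $\N$ starting at the unique element outside the image, and open cycles by $\Z$), with condition~(ii) of the equivalence definition doing exactly the work you identify of making the $\N$-indexed matchings possible.
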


The next two observations will also be useful in the sequel.

\begin{lemma}[{\rm \cite[Lemma~4]{ZM}}]\label{fwd cycles}
Let $f \in \Inj(\Omega)$ be any map. Then there is a one-to-one
correspondence between the elements of $\, \Omega \setminus
(\Omega)f$ and forward cycles in the cycle decomposition of $f$.
\end{lemma}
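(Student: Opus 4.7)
The plan is to exhibit the correspondence explicitly: send each $\alpha \in \Omega \setminus (\Omega)f$ to the unique cycle $\Sigma_\alpha$ of $f$ containing $\alpha$, and verify that this map is a bijection onto the set of forward cycles. There are three things to check: the map lands in forward cycles, it is surjective onto them, and each forward cycle has exactly one preimage.

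First I would show that if $\alpha \notin (\Omega)f$, then $\Sigma_\alpha$ is a forward cycle. Nonemptiness of $\Sigma_\alpha \setminus (\Omega)f$ is immediate since $\alpha$ witnesses it, so the only thing to verify is that $\Sigma_\alpha$ is infinite. The point is that any finite cycle $\Sigma$ must satisfy $\Sigma \subseteq (\Omega)f$: injectivity of $f$ and the invariance property (i) of a cycle force $f$ to restrict to a bijection $\Sigma \to \Sigma$ when $\Sigma$ is finite. Hence a finite cycle cannot contain $\alpha$, and $\Sigma_\alpha$ must be infinite, i.e., a forward cycle. Surjectivity of the map is built into the definition: a forward cycle $\Sigma$ has $\Sigma \setminus (\Omega)f \neq \emptyset$ by fiat, so picking any element $\alpha$ there recovers $\Sigma = \Sigma_\alpha$.

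The remaining (and main) task is injectivity, which amounts to showing that a forward cycle $\Sigma$ meets $\Omega \setminus (\Omega)f$ in a single point. Fix $\alpha \in \Sigma \setminus (\Omega)f$. Using the explicit description of cycles given in the excerpt, $\Sigma_\alpha = \{(\alpha)f^n : n \in \N\} \cup \{\beta \in \Omega : \exists n \in \Z_+, (\beta)f^n = \alpha\}$. The second set is empty, because any such $\beta$ would give $(\beta)f^{n-1} \in \Omega$ with $((\beta)f^{n-1})f = \alpha$, contradicting $\alpha \notin (\Omega)f$. Thus $\Sigma = \{(\alpha)f^n : n \in \N\}$, and for every $n \geq 1$ the element $(\alpha)f^n$ lies in $(\Omega)f$. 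So $\alpha$ is the unique element of $\Sigma$ outside $(\Omega)f$, as required.

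The only technical subtlety I anticipate is the finite-cycle argument in the first step (the observation that finite cycles are contained in $(\Omega)f$); everything else follows formally from the definitions and the cycle characterization already quoted in the excerpt, so no display calculations are needed.
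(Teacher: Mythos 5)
Your proof is correct and complete: the finite-cycle observation (that injectivity forces $f$ to restrict to a bijection of any finite cycle, so finite cycles lie inside $(\Omega)f$) and the computation showing a forward cycle meets $\Omega \setminus (\Omega)f$ exactly once are both sound. The paper itself omits the proof, deferring to \cite[Lemma~4]{ZM}, and your argument is the natural one based on the explicit description of cycles quoted in Section~2, so there is nothing to compare beyond noting that your write-up fills the omitted details correctly.
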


\begin{lemma}[{\rm \cite[Lemma~5]{ZM}}]\label{coimage}
Let $f,g \in \Inj(\Omega)$ be any two maps. Then $$|\Omega
\setminus (\Omega)f| + |\Omega \setminus (\Omega)g| = |\Omega
\setminus (\Omega)fg|.$$
\end{lemma}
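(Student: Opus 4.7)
The plan is to partition the image of $g$ based on whether an element comes from $(\Omega)f$ or from its complement, and exploit the injectivity of $g$ to preserve cardinalities.

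First I would set $A = (\Omega)f$, so $|\Omega \setminus A| = |\Omega \setminus (\Omega)f|$, and write $\Omega = A \sqcup (\Omega \setminus A)$. Since $g$ is injective, applying $g$ to this partition yields the disjoint decomposition $(\Omega)g = (A)g \sqcup (\Omega \setminus A)g$, and by injectivity we have $|(\Omega \setminus A)g| = |\Omega \setminus A| = |\Omega \setminus (\Omega)f|$.

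Next, observing that $(\Omega)fg = (A)g$, I would decompose $\Omega$ in two compatible ways:
\[
\Omega \;=\; (\Omega)g \,\sqcup\, (\Omega \setminus (\Omega)g) \;=\; (A)g \,\sqcup\, (\Omega \setminus A)g \,\sqcup\, (\Omega \setminus (\Omega)g).
\]
Removing $(A)g = (\Omega)fg$ from $\Omega$ leaves exactly the disjoint union
\[
\Omega \setminus (\Omega)fg \;=\; (\Omega \setminus A)g \,\sqcup\, (\Omega \setminus (\Omega)g).
\]
Taking cardinalities and substituting $|(\Omega \setminus A)g| = |\Omega \setminus (\Omega)f|$ gives the desired identity.

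There is essentially no obstacle here: the argument is bookkeeping on disjoint unions, with the only substantive ingredient being that $g$ is injective (so it preserves disjointness and cardinalities of subsets of $\Omega$). The statement makes sense and holds for arbitrary cardinalities on either side, so no case analysis on whether the coimages are finite or infinite is required.
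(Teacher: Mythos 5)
Your proof is correct: writing $\Omega \setminus (\Omega)fg$ as the disjoint union of $(\Omega \setminus (\Omega)f)g$ and $\Omega \setminus (\Omega)g$, using injectivity of $g$ to preserve disjointness and cardinality, is exactly the standard argument, and it works uniformly for finite and infinite cardinals. The paper itself omits the proof (citing Lemma~5 of \cite{ZM}), so there is nothing further to compare against.
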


We conclude this section by recalling another generalization of a familiar concept from group theory.

\begin{definition}
We shall say that a subset $M \subseteq \Inj(\Omega)$ is \emph{normal} if it is closed under conjugation by elements of $\, \Sym(\Omega).$
\end{definition}

\section{Some general considerations} \label{gen.sect}

From now on we shall assume that $\Omega$ is countable. Let
$\Inj_{\mathrm{fin}}(\Omega) \subseteq \Inj(\Omega)$ denote the
subset consisting of all elements $f$ such that $1 \leq |\Omega
\setminus (\Omega)f| < \aleph_0,$ and let $\Inj_{\infty}(\Omega)
\subseteq \Inj(\Omega)$ denote the subset consisting of all elements
$f$ such that $|\Omega \setminus (\Omega)f| = \aleph_0.$ By
Lemma~\ref{coimage}, these two sets are subsemigroups of
$\Inj(\Omega).$ Further, by Proposition~\ref{decomposition} and
Lemma~\ref{fwd cycles}, if $f,h \in \Inj(\Omega)$ are conjugate to
each other, then $|\Omega \setminus (\Omega)f| = |\Omega \setminus
(\Omega)h|.$ Hence, $\Inj_{\mathrm{fin}}(\Omega)$ and
$\Inj_{\infty}(\Omega)$ are normal. We shall not delve very deeply into its structure, but let us note that $\Inj_{\infty}(\Omega)$ is known as the \emph{Baer-Levi semigroup} (of type $(\aleph_0,\aleph_0)$). See, for instance, \cite{CP} and \cite{LM} for more information about it.

Given a submonoid $M \subseteq \Inj(\Omega),$ we set
$M_{\mathrm{gp}} = M \cap \Sym(\Omega),$ $M_{\mathrm{fin}} = M
\cap \Inj_{\mathrm{fin}}(\Omega),$ and $M_\infty = M \cap
\Inj_{\infty}(\Omega).$ Then $M = M_{\mathrm{gp}} \cup
M_{\mathrm{fin}} \cup M_\infty,$ and the union is disjoint. As
intersections of semigroups, $M_{\mathrm{gp}},$
$M_{\mathrm{fin}},$ and $M_\infty$ are semigroups.

Suppose that a submonoid $M \subseteq \Inj(\Omega)$ is normal. Then, by the above remarks, the same is true of $M_{\mathrm{gp}},$
$M_{\mathrm{fin}},$ and $M_\infty.$ In order to understand the
structure of $M,$ we shall try to understand the structures of
these three ``pieces" of $M$ individually. In the case of
$M_{\mathrm{gp}},$ we can accomplish this task very quickly, by
relying on the Schreier-Ulam Theorem. We require a little more notation in
order to state it in full detail.

\begin{definition}
Given a map $g \in \Sym(\Omega),$ the {\em support} of $g$ is the set $\, \{\alpha \in \Omega : (\alpha)g \neq \alpha\}.$ The subgroup of $\, \Sym(\Omega)$ consisting of all the elements having finite support will be denoted by $\, \Fin(\Omega).$ The elements of $\, \Fin(\Omega)$ are said to be {\em finitary}. Finally, $\, \Alt(\Omega) \subseteq \Fin(\Omega)$
will denote the alternating subgroup {\rm(}consisting of even
finitary permutations{\rm)}.
\end{definition}

\begin{theorem}[Schreier and Ulam~\cite{S&U}]\label{ulam}
$\Sym(\Omega)$ has precisely four normal subgroups, specifically,
$\{1\},$ $\Alt(\Omega),$ $\Fin(\Omega),$ and $\, \Sym(\Omega).$
\end{theorem}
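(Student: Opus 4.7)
The plan is to verify both directions. The forward direction is quick: $\{1\}$ and $\Sym(\Omega)$ are obviously normal; $\Fin(\Omega)$ is normal because $\supp(aga^{-1}) = (\supp g)a$ has the same cardinality as $\supp g$; and $\Alt(\Omega)$ is normal because conjugation in $\Sym(\Omega)$ preserves the cycle type, and hence the parity, of a finitary permutation.

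For the converse, suppose $N$ is a normal subgroup of $\Sym(\Omega)$ with $N \neq \{1\}$. The central goal is to show that $N$ contains a $3$-cycle; once that is done, since any two $3$-cycles in $\Sym(\Omega)$ are conjugate and $\Alt(\Omega)$ is generated by $3$-cycles, normality gives $\Alt(\Omega) \subseteq N$. I would produce a $3$-cycle in two steps. First, exhibit a nontrivial finitary element in $N$: given $g \in N \setminus \{1\}$, choose a finitary $a \in \Sym(\Omega)$ that does not commute with $g$ --- arranged by a suitable transposition placed near a point moved by $g$ --- so that $[a,g] = aga^{-1}g^{-1}$ lies in $N$ (using $aga^{-1} \in N$ and $g^{-1} \in N$), is finitary (as $a$ and $gag^{-1}$ both are), and is nontrivial. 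Second, starting from a nontrivial finitary element of $N$, an iterated commutator argument shrinks the support: whenever the support has size at least $4$, one can choose a further conjugator producing a new element of $N$ with strictly smaller support; this terminates in support size at most $3$, where a direct case analysis (transposition, double transposition, $3$-cycle) produces a $3$-cycle in $N$.

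Once $\Alt(\Omega) \subseteq N$ is in hand, the rest is routine. If $N$ contains an odd finitary element $\tau$, then $N \supseteq \Alt(\Omega) \sqcup \Alt(\Omega)\tau = \Fin(\Omega)$, so if moreover $N$ contains no element of infinite support we conclude $N = \Fin(\Omega)$. If $N$ contains some $h$ with $|\supp h| = \aleph_0$, I would manufacture an arbitrary transposition by splitting $\supp h$ into two disjoint infinite pieces and choosing a conjugate $aha^{-1}$ that agrees with $h$ on one piece while differing from $h$ by a single swap on the other, so that $(aha^{-1})h^{-1}$ is the desired transposition; conjugation then gives every transposition, forcing $N = \Sym(\Omega)$. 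The main obstacle throughout is the first step of the two-step reduction --- extracting a nontrivial finitary commutator from a possibly infinite-support $g$ --- because the obvious constructions risk either giving the identity or escaping $\Fin(\Omega)$, and the conjugator must be positioned with some care relative to the orbit structure of $g$ to avoid both failures.
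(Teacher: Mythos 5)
The paper does not prove this statement at all: it is quoted from Schreier and Ulam's 1933 paper as a known result, so there is no in-paper argument to compare yours against. Judged on its own merits, most of your sketch is the standard route and is fine: the forward direction is correct; the commutator $(aga^{-1})g^{-1}$ with a well-placed transposition $a$ does yield a nontrivial element of $N$ of support at most $4$; the support-shrinking reduction to a $3$-cycle is the classical argument; and $\Alt(\Omega)\subseteq N$, together with the coset decomposition $\Fin(\Omega)=\Alt(\Omega)\sqcup\Alt(\Omega)\tau$, correctly handles the cases $N=\Alt(\Omega)$ and $N=\Fin(\Omega)$.

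The genuine gap is in the last step. From an element $h\in N$ of infinite support you extract a transposition and then assert that ``conjugation then gives every transposition, forcing $N=\Sym(\Omega)$.'' But the subgroup of $\Sym(\Omega)$ generated by \emph{all} transpositions is exactly $\Fin(\Omega)$, not $\Sym(\Omega)$ --- indeed, that is precisely why $\Fin(\Omega)$ appears on the list of normal subgroups. So your argument only delivers $\Fin(\Omega)\subseteq N$, and the hardest part of the theorem, namely that a normal subgroup containing a permutation of infinite support must be all of $\Sym(\Omega)$, is left unproved. Closing this gap requires a different kind of argument: one must show that an arbitrary $g\in\Sym(\Omega)$ (of possibly infinite support) is a product of finitely many conjugates of $h^{\pm 1}$, possibly corrected by a finitary factor. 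The usual way to do this is to use the orbit structure of $h$: since $h$ moves infinitely many points, one can conjugate $h$ so that a product of two or three such conjugates realizes any prescribed permutation on a suitable infinite set, modulo $\Fin(\Omega)$ (this is the content of Bertram's refinement of Schreier--Ulam, cited in the paper's bibliography). Without some version of this step, the proof establishes only that the normal subgroups contained in $\Fin(\Omega)$ are $\{1\}$, $\Alt(\Omega)$, $\Fin(\Omega)$, and that any other normal subgroup contains $\Fin(\Omega)$ --- not that it equals $\Sym(\Omega)$.
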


\begin{lemma}\label{invert.elts.form.gp}
Let $M \subseteq \Inj(\Omega)$ be a normal submonoid. Then
$M_{\mathrm{gp}} = M \cap \Sym(\Omega)$ is a normal subgroup of
$\, \Sym(\Omega).$
\end{lemma}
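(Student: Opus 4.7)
The plan is to first observe what comes for free: since $M$ is a submonoid of $\Inj(\Omega)$ and $\Sym(\Omega)$ is a submonoid of $\Inj(\Omega)$, the intersection $M_{\mathrm{gp}} = M \cap \Sym(\Omega)$ is closed under composition and contains the identity. Normality under conjugation is also immediate: for $f \in M_{\mathrm{gp}}$ and $a \in \Sym(\Omega)$, the conjugate $afa^{-1}$ lies in $M$ because $M$ is normal, and it is a permutation because $f$ is. So the only nontrivial thing to check is that $M_{\mathrm{gp}}$ is closed under taking inverses.

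The main step, therefore, is to show that for every $f \in M_{\mathrm{gp}}$ we have $f^{-1} \in M$. The key observation is that $f$ and $f^{-1}$ are conjugate inside $\Sym(\Omega)$. To see this I would apply Proposition~\ref{decomposition}: $f$ and $f^{-1}$ share literally the same partition of $\Omega$ into orbits (since $\langle f\rangle$ and $\langle f^{-1}\rangle$ are the same subgroup of $\Sym(\Omega)$), so one can take identical indexing sets with $\Sigma_i = \Gamma_i$, making condition (i) of equivalence trivial. For condition (ii), note that because $f$ is a permutation, $(\Omega)f = \Omega$, so no cycle of $f$ can satisfy $\Sigma \setminus (\Omega)f \neq \emptyset$; that is, $f$ has no forward cycles, and likewise neither does $f^{-1}$. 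Hence condition (ii) is vacuous, and Proposition~\ref{decomposition} yields some $a \in \Sym(\Omega)$ with $afa^{-1} = f^{-1}$.

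Once this is in hand, normality of $M$ forces $f^{-1} = afa^{-1} \in M$, and since $f^{-1} \in \Sym(\Omega)$ we conclude $f^{-1} \in M_{\mathrm{gp}}$. Combined with the first paragraph, $M_{\mathrm{gp}}$ is a subgroup of $\Sym(\Omega)$ closed under conjugation by $\Sym(\Omega)$, i.e., a normal subgroup. I expect the only real subtlety to be verifying that the forward-cycle condition in the definition of equivalent cycle decompositions is vacuous for permutations; everything else is essentially formal, once Proposition~\ref{decomposition} is invoked.
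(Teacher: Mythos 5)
Your proof is correct and follows essentially the same route as the paper: reduce to closure under inverses, observe that $f$ and $f^{-1}$ have equivalent cycle decompositions, and invoke Proposition~\ref{decomposition} together with normality of $M$. The only difference is that you spell out why the equivalence holds (same orbits, no forward cycles for permutations), a step the paper dismisses as ``easy to see.''
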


\begin{proof}
As we have noted above, $M_{\mathrm{gp}}$ must be a normal submonoid of $\Inj(\Omega).$ Hence, it suffices to show that $M_{\mathrm{gp}}$ is closed under taking inverses. But, for any permutation $f \in \Sym(\Omega)$, it is easy to see that $f$ and $f^{-1}$ have equivalent cycle decompositions, and hence  $f^{-1} = afa^{-1}$ for some $a \in \Sym(\Omega)$, by Proposition~\ref{decomposition}. Therefore, since $M_{\mathrm{gp}}$ is closed under conjugation, it is closed under taking inverses as well.
\end{proof}

By the Lemma~\ref{invert.elts.form.gp} and Theorem~\ref{ulam}, if $M \subseteq \Inj(\Omega)$ is a normal submonoid, then $M_{\mathrm{gp}}$ must be one of
$\{1\},$ $\Alt(\Omega),$ $\Fin(\Omega),$ and $\Sym(\Omega).$ 

Let us next recall a result mentioned in the Introduction that will play an important role throughout this note.

\begin{theorem}[{\rm \cite[Corollary~10]{ZM}}]\label{composition}
Let $f, g, h \in \Inj(\Omega)\setminus \Sym(\Omega)$ be any three maps.
Then there exist permutations $a, b \in \Sym(\Omega)$ such that
$h = afa^{-1}bgb^{-1}$ if and only if $$|\Omega \setminus
(\Omega)f| + |\Omega \setminus (\Omega)g| = |\Omega \setminus
(\Omega)h|.$$
\end{theorem}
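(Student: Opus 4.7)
The forward direction is immediate. If $h = afa^{-1}bgb^{-1}$, two applications of Lemma~\ref{coimage} give
$$|\Omega \setminus (\Omega)h| = |\Omega \setminus (\Omega)(afa^{-1})| + |\Omega \setminus (\Omega)(bgb^{-1})|,$$
and by Proposition~\ref{decomposition} together with Lemma~\ref{fwd cycles}, conjugation by a permutation preserves the number of forward cycles and hence the co-image cardinality, so the right-hand side reduces to $|\Omega \setminus (\Omega)f| + |\Omega \setminus (\Omega)g|$.

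For the converse, set $n = |\Omega \setminus (\Omega)f|$ and $m = |\Omega \setminus (\Omega)g|$, so that $|\Omega \setminus (\Omega)h| = n + m$ by hypothesis. By Proposition~\ref{decomposition} it suffices to construct maps $f', g' \in \Inj(\Omega)$ having the same cycle decompositions as $f$ and $g$ respectively and satisfying $f'g' = h$; the conjugating permutations $a$ and $b$ are then furnished by Proposition~\ref{decomposition}. I would proceed by building $f'$ first and then reading off $g'$: the equation $f'g' = h$ forces $(\alpha)g' := (\beta)h$ whenever $\alpha = (\beta)f'$, well-defined because $f'$ is injective. This pins down $g'$ on all of $(\Omega)f'$, so the remaining freedom is an injection $\phi : \Omega \setminus (\Omega)f' \hookrightarrow \Omega \setminus (\Omega)h$ that extends $g'$ to its $n$ missing inputs; the $m$ unused points of $\Omega \setminus (\Omega)h$ then form $\Omega \setminus (\Omega)g'$ automatically, giving $g'$ the correct co-image cardinality.

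The heart of the proof is to choose $f'$ (with the cycle type of $f$) and $\phi$ so that the induced $g'$ has the cycle type of $g$. I would analyze this by tracing a cycle of $h$: each $h$-step factors as one $f'$-step followed by one $g'$-step, and a new $f'$- or $g'$-cycle can be initiated at any point of $\Omega \setminus (\Omega)f'$ or $\Omega \setminus (\Omega)g'$ that one inserts along the traversal. Conversely, cycles of $f'$ and $g'$ can be braided together so that their alternating steps walk along a prescribed cycle of $h$ of any allowable type. The main obstacle is the combinatorial bookkeeping needed to realize any prescribed allocation of finite-length, forward, and open cycles between $f'$ and $g'$, subject only to the constraint $n + m = |\Omega \setminus (\Omega)h|$. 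I expect to split into cases according to whether each of $n$, $m$, $|\Omega \setminus (\Omega)h|$ is finite or $\aleph_0$ and according to the cycle structure of $h$, exploiting the countable infinitude of $\Omega$ to absorb any finite mismatches by inserting matched pairs of forward or open cycles in $f'$ and $g'$ that compose to give the required cycles of $h$.
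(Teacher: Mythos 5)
First, a caveat: the paper does not prove this statement at all --- it is imported verbatim as Corollary~10 of \cite{ZM} --- so there is no in-paper argument to compare yours against. Judged on its own terms, your forward direction is complete (conjugation preserves $|\Omega\setminus(\Omega)f|$, then apply Lemma~\ref{coimage}), and your reduction of the converse is the right opening move: writing $f'=afa^{-1}$ and $g'=bgb^{-1}$, it suffices by Proposition~\ref{decomposition} to build $f'$ and $g'$ with cycle decompositions equivalent to those of $f$ and $g$ and with $f'g'=h$, and you correctly identify the available freedom as the choice of $f'$ together with an injection $\phi:\Omega\setminus(\Omega)f'\rightarrow\Omega\setminus(\Omega)h$.

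The converse nonetheless has a genuine gap: everything after ``the heart of the proof'' is a statement of intent rather than an argument. The claim that cycles of $f'$ and $g'$ ``can be braided together'' so as to realize \emph{any} pair of cycle types subject only to the coimage equation is precisely the content of the theorem, so deferring it to unspecified ``combinatorial bookkeeping'' leaves the entire difficulty unaddressed. Two concrete points show the sketch cannot be completed in the local, cycle-by-cycle-of-$h$ form you describe. (a) A cycle of $f'$ or $g'$ need not lie inside, or meet only finitely many, cycles of $h$: take $f$ and $g$ each to be a single forward cycle filling all of $\Omega$ (so both coimages have size $1$) and $h$ any map with $|\Omega\setminus(\Omega)h|=2$ having infinitely many finite cycles; then the unique cycle of $f'$ must thread through every cycle of $h$, the induced $g'$ is completely determined by the global enumeration of $\Omega$ chosen for $f'$ (plus one value of $\phi$), and one must verify that some enumeration makes $g'$ a single forward cycle. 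This is a global construction, not a traversal of one cycle of $h$ at a time. (b) The hypothesis $f,g\notin\Sym(\Omega)$ --- equivalently, by Lemma~\ref{fwd cycles}, that $f'$ and $g'$ each possess at least one forward cycle --- is essential: the analogous statement for permutations is false (a product of two conjugates of transpositions is always even, hence never a transposition). A correct proof must show exactly how that single guaranteed forward cycle supplies enough freedom to hit the prescribed cycle counts \emph{exactly}; your ``insert matched pairs of forward or open cycles'' device can only perturb cycle counts, and nothing in the sketch controls the final tallies. Until the construction of $f'$ and $\phi$ is actually carried out for an arbitrary triple of cycle types, this is a plan for a proof, not a proof.
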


\begin{corollary}\label{inf.coimage.monoid}
Let $M \subseteq \Inj(\Omega)$ be a normal submonoid. Then either $M \cap \Inj_{\infty}(\Omega) = \emptyset$ or $\, \Inj_{\infty}(\Omega) \subseteq M.$
\end{corollary}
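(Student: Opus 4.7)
The plan is to assume $M \cap \Inj_{\infty}(\Omega) \neq \emptyset$ and then show that every $h \in \Inj_{\infty}(\Omega)$ lies in $M$, by writing $h$ as a product of two $\Sym(\Omega)$-conjugates of a single element of $M \cap \Inj_{\infty}(\Omega)$. The two key ingredients I need are the Composition Theorem (Theorem~\ref{composition}) and the fact that $M$ is a normal submonoid, i.e.\ closed under products and under $\Sym(\Omega)$-conjugation.

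More precisely, pick any $f \in M \cap \Inj_{\infty}(\Omega)$ and any $h \in \Inj_{\infty}(\Omega)$. Since $|\Omega \setminus (\Omega)f| = |\Omega \setminus (\Omega)h| = \aleph_0$, neither $f$ nor $h$ lies in $\Sym(\Omega)$, so the hypothesis of Theorem~\ref{composition} is satisfied. Taking $g := f$ in that theorem, the cardinal identity
\[
|\Omega \setminus (\Omega)f| + |\Omega \setminus (\Omega)f| \;=\; \aleph_0 + \aleph_0 \;=\; \aleph_0 \;=\; |\Omega \setminus (\Omega)h|
\]
holds, so there exist $a, b \in \Sym(\Omega)$ with $h = afa^{-1}bfb^{-1}$. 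Since $M$ is normal, both $afa^{-1}$ and $bfb^{-1}$ lie in $M$, and since $M$ is a submonoid, their product $h$ lies in $M$ as well. This gives $\Inj_{\infty}(\Omega) \subseteq M$, completing the dichotomy.

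I do not anticipate any real obstacle: the whole content of the corollary is essentially packaged inside Theorem~\ref{composition}, and the only small observation needed is that one may take $g = f$ to express an arbitrary element of $\Inj_{\infty}(\Omega)$ without first having a second element of $M \cap \Inj_{\infty}(\Omega)$ in hand. The normality of $M$ then supplies the two conjugates required, and closure under multiplication finishes the argument.
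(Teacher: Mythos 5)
Your proof is correct and is essentially the same as the paper's: both take a single $f \in M \cap \Inj_{\infty}(\Omega)$, note that products $afa^{-1}bfb^{-1}$ lie in $M$ by normality and closure under composition, and invoke Theorem~\ref{composition} with the identity $\aleph_0 + \aleph_0 = \aleph_0$ to realize every element of $\Inj_{\infty}(\Omega)$ in this form. Your write-up merely makes explicit the cardinal arithmetic that the paper leaves implicit.
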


\begin{proof}
Suppose that there is an element $f \in M \cap \Inj_{\infty}(\Omega).$ Then, $afa^{-1}bfb^{-1} \in M$ for all $a, b \in \Sym(\Omega),$ since $M$ is closed under conjugation and composition. Hence, $\Inj_{\infty}(\Omega) \subseteq M,$ by the previous theorem.
\end{proof}

Let $M \subseteq \Inj(\Omega)$ be a normal submonoid. Theorem~\ref{ulam}, Lemma~\ref{invert.elts.form.gp}, and Corollary~\ref{inf.coimage.monoid} alow us to completely characterize $M_{\mathrm{gp}}$ and $M_\infty$, and so it remains to explore $M_{\mathrm{fin}}.$ Unlike $M_\infty,$ the structure of $M_{\mathrm{fin}}$ depends on whether $M_{\mathrm{gp}}$ is $\{1\},$ $\Alt(\Omega),$ $\Fin(\Omega),$ or $\Sym(\Omega).$ We shall discuss these four cases individually in the four sections that follow. Then, in Section~\ref{main.section} we shall collect all those pieces for a complete description of the normal submonoids of $\Inj(\Omega)$. It is easiest to describe $M_{\mathrm{fin}}$ when $M_{\mathrm{gp}} = \Sym(\Omega),$ so we start there.

\section{Monoids containing $\Sym(\Omega)$}

We note that any submonoid of $\Inj(\Omega)$ that contains
$\Sym(\Omega)$ is automatically normal. More generally, we have the following fact.

\begin{lemma}\label{contains sym}
Let $M \subseteq \Inj(\Omega)$ be any submonoid that contains $\,
\Sym(\Omega),$ and let $f \in M$ be any element. Suppose that $g
\in \Inj(\Omega)$ satisfies $\, |\Omega \setminus (\Omega)g| =
|\Omega \setminus (\Omega)f|.$ Then $g \in M.$
\end{lemma}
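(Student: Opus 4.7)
The plan is to show that $g$ can be written as $afb$ for some $a, b \in \Sym(\Omega)$. Granted this factorization, the conclusion is immediate: since $\Sym(\Omega) \subseteq M$ and $f \in M$, closure under the monoid operation gives $afb \in M$, so $g \in M$. The strong hypothesis $\Sym(\Omega) \subseteq M$, rather than mere normality, is what makes this direct approach available; in particular, Theorem~\ref{composition} is not needed here.

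To construct $b$, I note that $f$ and $g$ are both injective, so $|(\Omega)f| = |\Omega| = |(\Omega)g|$, while the complements $\Omega \setminus (\Omega)f$ and $\Omega \setminus (\Omega)g$ have equal cardinality by hypothesis. I would pick any bijection $(\Omega)f \to (\Omega)g$ together with any bijection $\Omega \setminus (\Omega)f \to \Omega \setminus (\Omega)g$, and let $b \in \Sym(\Omega)$ be their common extension; then $(\Omega)fb = (\Omega)g$.

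With $b$ fixed, the composite $fb$ is an injection of $\Omega$ onto $(\Omega)g$. Hence for every $\alpha \in \Omega$ there is a unique $\beta_\alpha \in \Omega$ with $(\beta_\alpha)fb = (\alpha)g$, and setting $(\alpha)a = \beta_\alpha$ defines a function $a\colon \Omega \to \Omega$. Injectivity of $a$ is inherited from injectivity of $g$: if $\beta_{\alpha_1} = \beta_{\alpha_2}$ then $(\alpha_1)g = (\alpha_2)g$. Surjectivity follows by observing that for any $\beta \in \Omega$ we have $(\beta)fb \in (\Omega)g$, hence $(\beta)fb = (\alpha)g$ for some (unique) $\alpha$, and then $(\alpha)a = \beta_\alpha = \beta$. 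By construction $afb = g$, as required. The only mild thing to verify is that $a$ is a genuine permutation of $\Omega$, but as just indicated this is immediate from the injectivity of $f$, $b$, and $g$; there is no real obstacle.
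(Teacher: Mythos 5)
Your proof is correct and is essentially the paper's argument: both exploit the hypothesis $|\Omega \setminus (\Omega)f| = |\Omega \setminus (\Omega)g|$ to extend a bijection $(\Omega)f \to (\Omega)g$ to a permutation of $\Omega$ and then invoke closure of $M$ under products with elements of $\Sym(\Omega)$. The only (cosmetic) difference is that the paper chooses the bijection on images to be $(\alpha)f \mapsto (\alpha)g$ from the outset, so a single right factor suffices ($g = fh$), whereas your arbitrary choice of $b$ forces you to introduce the correcting left permutation $a$.
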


\begin{proof}
Since $f$ and $g$ are injective, the formula $((\alpha)f)h = (\alpha)g$ defines a bijection $h: (\Omega)f \rightarrow (\Omega)g$. Moreover, since $|\Omega \setminus (\Omega)f| = |\Omega \setminus (\Omega)g|$, we can extend $h$ to a permutation of $\Omega$, using any bijection $\Omega \setminus (\Omega)f \rightarrow \Omega \setminus (\Omega)g$. The desired conclusion now follows from the fact that $g = fh \in M.$
\end{proof}

\begin{definition}
Given a subset $M \subseteq \Inj(\Omega),$ we define $$M_\N = \{|\Omega \setminus (\Omega)f| : f \in M\} \cap \N.$$ Also, given a subset $N \subseteq \N$, we define $$S(N) = \{f \in \Inj(\Omega) : |\Omega \setminus (\Omega)f| \in N \setminus \{0\}\}.$$
\end{definition}

By Lemma~\ref{coimage}, $M_\N$ is a submonoid of the additive monoid $\N$ whenever $M$ is a submonoid of $\Inj(\Omega)$, and $S(N)$  is a subsemigroup of $\Inj(\Omega)$ whenever $N$ is a subsemigroup of $\N$. It is also easy to see that $S(N)$ is normal.

We can now quickly describe the submonoids of $\Inj(\Omega)$ that contain $\Sym(\Omega)$.

\begin{proposition}\label{sym.classif}
For any submonoid $N \subseteq \N$, both $\, \Sym(\Omega) \cup S(N)$ and $\, \Sym(\Omega) \cup S(N) \cup \Inj_{\infty}(\Omega)$ are submonoids of $\, \Inj(\Omega)$.

Conversely, a submonoid $M \subseteq \Inj(\Omega)$ that contains $\, \Sym(\Omega)$ must be either of the form $\, \Sym(\Omega) \cup S(N)$ or of the form $\, \Sym(\Omega) \cup S(N) \cup \Inj_{\infty}(\Omega)$, for some submonoid $N \subseteq \N$. $($Specifically, $N = M_\N$.$)$  
\end{proposition}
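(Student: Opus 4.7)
My plan is to handle the two directions of the proposition in turn, leaning heavily on Lemma~\ref{coimage}, Lemma~\ref{contains sym}, and Corollary~\ref{inf.coimage.monoid}, together with the remark (already noted after the definition of $M_\N$) that $S(N)$ is automatically a subsemigroup and $M_\N$ is an additive submonoid of $\N$.

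For the forward direction, consider a submonoid $N \subseteq \N$ and check closure of $\Sym(\Omega) \cup S(N)$ under composition (the identity is of course in $\Sym(\Omega)$). Given two factors, if both are in $\Sym(\Omega)$, their product is; if one is in $\Sym(\Omega)$ and the other in $S(N)$, then Lemma~\ref{coimage} yields that the coimage of the product equals the coimage of the $S(N)$-factor, so the product lies in $S(N)$; if both lie in $S(N)$, then their coimages are positive integers lying in $N$, so their sum lies in $N$ and is still positive, giving a product in $S(N)$. Adding $\Inj_\infty(\Omega)$ does not affect closure because, by Lemma~\ref{coimage}, as soon as one factor has infinite coimage the product does too, so any product involving an $\Inj_\infty(\Omega)$-factor lands in $\Inj_\infty(\Omega)$.

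For the converse, let $M \subseteq \Inj(\Omega)$ be a submonoid containing $\Sym(\Omega)$, and set $N = M_\N$, which is an additive submonoid of $\N$ by Lemma~\ref{coimage}. Using the disjoint decomposition $M = M_{\mathrm{gp}} \cup M_{\mathrm{fin}} \cup M_\infty$, the assumption $\Sym(\Omega) \subseteq M$ gives $M_{\mathrm{gp}} = \Sym(\Omega)$, and Corollary~\ref{inf.coimage.monoid} gives that $M_\infty$ is either empty or all of $\Inj_\infty(\Omega)$. So the only thing to pin down is $M_{\mathrm{fin}}$, which I claim equals $S(N)$. The inclusion $M_{\mathrm{fin}} \subseteq S(N)$ is immediate from the definition of $N$. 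Conversely, if $f \in S(N)$ then $|\Omega \setminus (\Omega)f|$ is a positive integer in $N$, so there is some $g \in M$ with $|\Omega \setminus (\Omega)g| = |\Omega \setminus (\Omega)f|$; applying Lemma~\ref{contains sym} (which requires exactly the hypothesis $\Sym(\Omega) \subseteq M$) gives $f \in M$, and since the coimage is finite and nonzero, $f \in M_{\mathrm{fin}}$.

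I do not anticipate any real obstacle here; the substantive content has been packaged into Lemma~\ref{contains sym} and Corollary~\ref{inf.coimage.monoid}. The only point that requires a moment of thought is remembering to exclude $0$ from $S(N)$ in the forward direction so that composing two $S(N)$-elements cannot produce a permutation — but this is automatic since the sum of two positive integers is positive.
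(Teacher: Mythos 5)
Your proof is correct and follows essentially the same route as the paper's: the forward direction is a direct closure check via Lemma~\ref{coimage}, and the converse identifies $M_{\mathrm{fin}} = S(M_\N)$ via Lemma~\ref{contains sym} and settles $M_\infty$ via Corollary~\ref{inf.coimage.monoid}. Your version only spells out the casework in the forward direction a bit more explicitly than the paper does.
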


\begin{proof}
By Lemma~\ref{coimage}, for any submonoid $N \subseteq \N$, $S(N)$ is closed under multiplication by elements of $\Sym(\Omega)$, and $\Inj_{\infty}(\Omega)$ closed under multiplication by elements of $\Sym(\Omega) \cup S(N)$. The first claim now follows from the fact that $\Sym(\Omega)$, $S(N)$, and $\Inj_{\infty}(\Omega)$ are all subsemigroups of $\Inj(\Omega)$.

For the converse, let $M \subseteq \Inj(\Omega)$ be a submonoid containing $\Sym(\Omega)$. Then, by Lemma~\ref{contains sym}, an element $f \in \Inj_{\mathrm{fin}}(\Omega)$ is in $M_{\mathrm{fin}}$ if and only if $|\Omega \setminus (\Omega)f| \in M_\N \setminus \{0\}.$ Hence,
$M_{\mathrm{fin}} = S(M_\N).$ By the same lemma (or, by
Corollary~\ref{inf.coimage.monoid}), $M_\infty = M \cap \Inj_{\infty}(\Omega)$ must be either empty or all of $\Inj_{\infty}(\Omega).$ Thus, either $M = \Sym(\Omega) \cup S(M_\N)$ or $M = \Sym(\Omega) \cup S(M_\N) \cup \Inj_{\infty}(\Omega).$
\end{proof}

In the following section we shall discuss the next simplest case, of
submonoids $M \subseteq \Inj(\Omega)$ such that $M \cap
\Sym(\Omega) = \{1\}.$

\section{Monoids with trivial groups of units}

We shall require the following well-known observation about
additive submonoids of $\N.$

\begin{lemma}
Every additive submonoid $N \subseteq \N$ has a unique minimal
generating set.
\end{lemma}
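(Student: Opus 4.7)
The plan is to exhibit the minimal generating set explicitly as
\[
G = \{n \in N \setminus \{0\} : n \neq a + b \text{ for any } a,b \in N \setminus \{0\}\},
\]
that is, the set of nonzero elements of $N$ that cannot be written as a sum of two nonzero elements of $N$. The goal then splits into two claims: $(1)$ $G$ generates $N$, and $(2)$ $G$ is contained in every generating set of $N$. Together these make $G$ the unique \emph{minimum} (hence unique minimal) generating set.

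For claim $(1)$, I would proceed by strong induction on $n \in N$. The empty sum gives $0$, so we may assume $n > 0$. If $n \in G$ there is nothing to prove; otherwise by definition of $G$ we may write $n = a + b$ with $a, b \in N \setminus \{0\}$. Since $a, b < n$, the inductive hypothesis expresses each as a sum of elements of $G$, and concatenating these expressions writes $n$ as a sum of elements of $G$.

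For claim $(2)$, let $S \subseteq N$ be any generating set of the monoid $N$, and let $g \in G$. Then $g = s_1 + \cdots + s_k$ for some $s_i \in S$. Discard any $s_i$ equal to $0$; since $g \neq 0$, at least one $s_i$ remains. If two or more nonzero summands remained, then $g$ would be a sum of two nonzero elements of $N$ (group the remaining summands into two nonempty blocks), contradicting $g \in G$. Hence exactly one $s_i$ is nonzero, and $g = s_i \in S$. This shows $G \subseteq S$.

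No step looks like a real obstacle here; the only mild subtlety is being careful about the role of $0$, which can occur in a generating set but must be discarded before arguing that an element of $G$ is indecomposable. Once both claims are established, $G$ is itself a generating set (by $(1)$) which lies inside every generating set (by $(2)$), so in particular $G$ is the unique minimal one.
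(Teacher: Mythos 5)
Your proof is correct, and it takes a genuinely different (though closely related) route from the paper. The paper defines $G$ as the intersection of \emph{all} generating sets of $N$, observes it is nonempty, and then argues by minimal counterexample (using the well-ordering of $\N$) that $G$ still generates $N$; uniqueness of the minimal generating set is then immediate since $G$ sits inside every generating set. You instead give an \emph{explicit} description of the candidate set as the irreducible (atomic) elements of $N$ --- the nonzero elements not expressible as a sum of two nonzero elements --- and verify separately that this set generates (by strong induction, which is the same well-ordering used in the paper's minimal-counterexample step) and that it is contained in every generating set. Both arguments actually prove the stronger statement that $N$ has a \emph{minimum} generating set. What your version buys is a concrete characterization of $\Gen(N)$ as the set of atoms of $N$, which is the description one uses in practice (for instance when checking that $\Gen(N)$ is finite); what the paper's version buys is brevity and the avoidance of any case analysis about zero summands, at the cost of leaving $\Gen(N)$ defined only implicitly. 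Your handling of the one subtle point --- discarding zero summands before invoking irreducibility --- is correct, as is the observation that grouping the remaining summands into two nonempty blocks yields two nonzero elements of $N$ (closure of $N$ under addition is what makes the blocks land in $N$).
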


\begin{proof}
This is clear if $N = \{0\},$ so we may assume that $N \neq
\{0\}.$ Let $\{G_i\}_{i\in I}$ be the set of all generating sets
for $N$ as a monoid. We shall show that $G = \bigcap_{i\in I} G_i$ is a generating set for $N.$ (Note that $G$ is necessarily nonempty, since it contains the least nonzero element of N.) 

Suppose that $G$ does not generate $N.$ Let $n \in N \setminus
\{0\}$ be the least element that is not in $\langle G \rangle,$
the monoid generated by $G$. Then there must be some generating set $G_i$ ($i\in I$) such that $n \notin G_i.$ Hence $n = n_1 + \dots + n_k$ for some $n_1, \dots, n_k \in G_i,$ since $G_i$ is a generating set for $N.$ We must necessarily have $n_1, \dots, n_k < n.$ By our choice of $n,$
this implies that $n_1, \dots, n_k \in \langle G \rangle.$ Hence
$n \in \langle G \rangle$; a contradiction. Therefore $G$
generates all of $N.$
\end{proof}

We can thus make the following

\begin{definition}
Given an additive submonoid $N \subseteq \N,$ let $\, \Gen(N)$
denote the unique minimal generating set for $N$ as a monoid.
\end{definition}

We note, in passing, that $\Gen(N)$ is always finite (e.g., see
\cite[Theorem 2.4(2)]{Gilmer}). We are now ready to describe the normal submonoids of $\Inj(\Omega)$ that have no nontrivial units.

\begin{proposition}\label{gp=fin}
Let $M \subseteq \Inj(\Omega)$ be any submonoid such that $M \cap
\Sym(\Omega) = \{1\}.$ Then $M$ is normal if and only if either $M = \{1\} \cup M_{\mathrm{fin}}$ or $M = \{1\} \cup M_{\mathrm{fin}} \cup \Inj_{\infty}(\Omega)$, where $$M_{\mathrm{fin}} = B \cup \{f \in \Inj(\Omega) :
|\Omega \setminus (\Omega)f| \in N \setminus (\Gen(N) \cup
\{0\})\},$$ for some additive submonoid $N \subseteq \N$, and normal subset $B \subseteq \Inj_{\mathrm{fin}}(\Omega)$ that satisfies $B_\N = \Gen(N).$
\end{proposition}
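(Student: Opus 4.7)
For the forward direction, assume $M$ is a normal submonoid with $M \cap \Sym(\Omega) = \{1\}$. Set $N = M_\N$, which is an additive submonoid of $\N$ by Lemma~\ref{coimage}, and define $B = \{f \in M_{\mathrm{fin}} : |\Omega \setminus (\Omega)f| \in \Gen(N)\}$. Conjugation preserves coimage, so $B$ inherits normality from $M$; each $n \in \Gen(N) \subseteq M_\N$ is the coimage of some element of $M$, which therefore lies in $B$, giving $B_\N = \Gen(N)$. Corollary~\ref{inf.coimage.monoid} accounts for the two choices for $M_\infty$, so it remains to show that every $f \in \Inj(\Omega)$ with $|\Omega \setminus (\Omega)f| = n \in N \setminus (\Gen(N) \cup \{0\})$ lies in $M$.

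I would prove this by strong induction on $n$. Since $n$ is a positive element of $N$ not in $\Gen(N)$, it can be written as $n = n_1 + n_2$ with $n_1 \in \Gen(N)$ and $n_2 \in N \setminus \{0\}$ satisfying $n_2 < n$. Pick $g_1 \in B$ with coimage $n_1$; for $g_2$, pick some $g_2 \in B$ with coimage $n_2$ if $n_2 \in \Gen(N)$, and otherwise take any $g_2 \in \Inj(\Omega)$ with coimage $n_2$, which lies in $M$ by the inductive hypothesis. Since $f, g_1, g_2 \notin \Sym(\Omega)$ (their coimages are positive), Theorem~\ref{composition} yields $a, b \in \Sym(\Omega)$ with $f = a g_1 a^{-1} b g_2 b^{-1}$, and normality of $M$ then gives $f \in M$. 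This inductive use of Theorem~\ref{composition}, with its case split on whether $n_2 \in \Gen(N)$, is the main obstacle.

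For the converse, I verify that the described $M$ is a normal submonoid. Normality is immediate from the normality of each piece ($\{1\}$, $B$, the coimage-indexed set, and possibly $\Inj_\infty(\Omega)$). Closure under multiplication reduces via Lemma~\ref{coimage} to showing that if $f, g \in M$ have positive finite coimages $m_1, m_2 \in N$, then $m_1 + m_2 \in N \setminus (\Gen(N) \cup \{0\})$. The nontrivial point is that $m_1 + m_2 \notin \Gen(N)$: if $m = m_1 + m_2 \in \Gen(N)$ with $m_1, m_2 > 0$, then $m_1, m_2 < m$, and expanding each as a sum of elements of $\Gen(N)$ (all of which are strictly less than $m$, hence distinct from $m$) shows that $\Gen(N) \setminus \{m\}$ already generates $N$, contradicting minimality. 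Compositions involving $1$ or $\Inj_\infty(\Omega)$ are handled directly by Lemma~\ref{coimage}, and the condition $M \cap \Sym(\Omega) = \{1\}$ holds since every other element of $M$ has positive (possibly infinite) coimage.
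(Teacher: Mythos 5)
Your proof is correct and follows essentially the same route as the paper's: define $N = M_\N$ and $B$ as the set of elements of $M$ with coimage in $\Gen(N)$, use Theorem~\ref{composition} to capture all maps whose coimage is a non-generator of $N$, and invoke Corollary~\ref{inf.coimage.monoid} for $M_\infty$. You merely spell out two details the paper leaves implicit: the induction supplying a witness $g_2$ of coimage $n_2$ (which is in fact unnecessary, since any $n_2 \in M_\N \setminus \{0\}$ is by definition of $M_\N$ already the coimage of some element of $M$), and the verification that a sum of two nonzero elements of $N$ cannot lie in $\Gen(N)$.
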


\begin{proof}
Suppose that $M$ is normal. Let $N = M_\N,$ and set $$B = \{f \in M : |\Omega \setminus (\Omega)f| \in \Gen(N)\}.$$ Then $B$ is normal, since $M$ is. Further, if $h \in \Inj(\Omega)$ is any element such that $|\Omega \setminus (\Omega)h| \in M_\N \setminus (\Gen(M_\N) \cup \{0\}),$ then $h
\in M,$ by Theorem~\ref{composition}. Hence, $$B \cup \{f \in \Inj(\Omega) : |\Omega \setminus (\Omega)f| \in N \setminus (\Gen(N) \cup
\{0\})\} = M \cap \Inj_{\mathrm{fin}} (\Omega).$$ The desired conclusion then follows from Corollary~\ref{inf.coimage.monoid}.

For the converse, suppose that $M_{\mathrm{fin}}$ has the form specified in the statement. First, we note that this set is a subsemigroup of $\Inj(\Omega)$. For, given any two elements $f, g \in M_{\mathrm{fin}}$, we have $$fg \in \{f \in \Inj(\Omega) : |\Omega \setminus (\Omega)f| \in N \setminus (\Gen(N) \cup \{0\})\},$$ by Lemma~\ref{coimage}. As usual, this implies that $M$ is a submonoid. Since $B$ is normal, so is $M_{\mathrm{fin}}$, considering that $$\{f \in \Inj(\Omega) : |\Omega \setminus (\Omega)f| \in N \setminus (\Gen(N) \cup \{0\})\}$$ is always normal. It follows that $M$ is normal as well.
\end{proof}

In the above statement we describe the structure of a normal submonoid of $\Inj(\Omega)$ in terms of a normal set $B$, which may, at first glance, seem not especially helpful. However, by Proposition~\ref{decomposition}, constructing such a set $B$ simply amounts to picking any subset of $\Inj(\Omega)$ satisfying $B_\N = \Gen(N)$ and then adding to it all maps that have cycle decompositions equivalent to those of the maps already in $B$.

\section{Monoids containing $\Fin(\Omega)$}

Our next goal is to describe the normal submonoids $M \subseteq \Inj(\Omega)$ satisfying $M \cap \Sym(\Omega) = \Fin(\Omega).$ To accomplish this we shall first describe how composition with elements of $\Fin(\Omega)$ affects the cycle decomposition of an arbitrary element of $\Inj(\Omega).$

The following notation will be convenient in the sequel.

\begin{definition}
For each $f \in \Inj(\Omega)$ and $n \in \Z_+$ let $$(f)\C_n =
|\{\Sigma \subseteq \Omega : \Sigma \ \mathrm{is} \ \mathrm{a} \
\mathrm{cycle} \ \mathrm{under} \ f \ \mathrm{of} \
\mathrm{cardinality} \ n\}|.$$ Similarly, let
$$(f)\C_{\mathrm{open}} = |\{\Sigma \subseteq \Omega : \Sigma \
\mathrm{is} \ \mathrm{an} \ \mathrm{open} \ \mathrm{cycle} \
\mathrm{under} \ f\}|$$ and
$$(f)\C_{\mathrm{fwd}} = |\{\Sigma \subseteq \Omega :  \Sigma \ \mathrm{is} \
\mathrm{a} \ \mathrm{forward} \ \mathrm{cycle} \ \mathrm{under} \
f\}|.$$
\end{definition}

By Proposition~\ref{decomposition}, two elements $f,g \in
\Inj(\Omega)$ are conjugates of one another if and only if
$(f)\C_{\mathrm{open}} = (g)\C_{\mathrm{open}},$
$(f)\C_{\mathrm{fwd}} = (g)\C_{\mathrm{fwd}},$ and $(f)\C_n
= (g)\C_n$ for all $n \in \Z_+.$ We shall also require a more general
equivalence relation on elements of $\Inj(\Omega).$

\begin{definition} \label{fin.equiv.def}
Given any two maps $f,g \in \Inj(\Omega),$ let us write $f \approx_{\mathrm{fin}} g$ if the following four conditions are satisfied:
\begin{enumerate}
\item[$(${\rm i}$)$] $(f)\C_{\mathrm{open}} =
(g)\C_{\mathrm{open}};$
\item[$(${\rm ii}$)$] $(f)\C_{\mathrm{fwd}} =
(g)\C_{\mathrm{fwd}};$
\item[$(${\rm iii}$)$]$(f)\C_n \neq (g)\C_n$ for only finitely
many $n \in \Z_+;$
\item[$(${\rm iv}$)$] if $(f)\C_n \neq (g)\C_n$ for some $n \in \Z_+,$
then $(f)\C_n$ and $(g)\C_n$ are both finite.
\end{enumerate}
Further, we shall say that a subset $B \subseteq \Inj(\Omega)$ is $\, \approx_{\mathrm{fin}}$-closed if for all $f, g \in \Inj(\Omega)$ such that $f \approx_{\mathrm{fin}} g$, $f \in B$ if and only if $g \in B.$
\end{definition}

Clearly, $\approx_{\mathrm{fin}}$ is an equivalence relation on $\Inj(\Omega).$ We shall prove that for $f,g \in \Inj(\Omega)$, each having at least one infinite cycle in its cycle decomposition, $f \approx_{\mathrm{fin}} g$ if and only if $f$ is conjugate to $h_1 g h_2$ for some $h_1, h_2 \in \Fin(\Omega).$ The argument is
divided into several steps.

\begin{lemma}\label{fin.equiv.0.5}
Let $f \in \Inj(\Omega)$ and $h \in \Fin(\Omega)$ be any two maps. Then
\begin{enumerate}
\item[$(${\rm i}$)$] $(f)\C_{\mathrm{open}} =
(hf)\C_{\mathrm{open}},$ and
\item[$(${\rm ii}$)$] $(f)\C_{\mathrm{open}} = (fh)\C_{\mathrm{open}}.$
\end{enumerate}
\end{lemma}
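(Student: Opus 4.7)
The plan is to reduce to the case where $h = (\alpha, \beta)$ is a single transposition. Since $\Fin(\Omega)$ is generated by transpositions, any $h \in \Fin(\Omega)$ can be written as $h = t_1 \cdots t_k$ with each $t_i$ a transposition, and iterated application of the transposition case of (i) yields
$$(hf)\C_{\mathrm{open}} = (t_1(t_2 \cdots t_k f))\C_{\mathrm{open}} = (t_2 \cdots t_k f)\C_{\mathrm{open}} = \cdots = (f)\C_{\mathrm{open}};$$
the same reduction handles (ii).

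Fix then a transposition $h = (\alpha,\beta)$ and consider (i). The key observation is that $hf$ agrees with $f$ everywhere except at preimages of $\alpha$ and $\beta$: if $f(x) = \alpha$, then $(hf)(x) = \beta$, and if $f(x) = \beta$, then $(hf)(x) = \alpha$. In the language of the functional digraph, this is a ``swap of incoming arrows'' at $\alpha$ and $\beta$. From here I would case-analyze on the cycles $\Sigma, \Gamma$ of $f$ containing $\alpha$ and $\beta$. If $\Sigma = \Gamma$, the cycle splits under $hf$, and the only subcase affecting the open count is $\Sigma$ open: setting $\Sigma = \{c_n : n \in \Z\}$ with $\alpha = c_0$ and $\beta = c_k$ for some $k > 0$, one checks directly that $hf|_\Sigma$ consists of a finite cycle $\{c_0, \dots, c_{k-1}\}$ together with an open cycle $\{c_n : n < 0\} \cup \{c_n : n \geq k\}$, preserving the open count. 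If $\Sigma \neq \Gamma$, the cycles merge under $hf$, and an analogous parametrization in each of the subcases by type (finite/forward/open $\times$ finite/forward/open) confirms that the open count on $\Sigma \cup \Gamma$ is preserved --- the only nontrivial cases are open$\,+\,$open (yielding two open cycles), open$\,+\,$forward (yielding one open plus one forward), and open$\,+\,$finite (yielding a single open cycle). Edge cases, such as $\alpha$ or $\beta$ being the head of a forward cycle with no $f$-preimage, fit directly into these arguments, and cycles disjoint from $\{\alpha,\beta\}$ are untouched by $h$, so the global open count is preserved.

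Part (ii) is dual: for $h = (\alpha,\beta)$, the map $fh$ agrees with $f$ except at $\alpha$ and $\beta$ themselves, with $(fh)(\alpha) = f(\beta)$ and $(fh)(\beta) = f(\alpha)$ --- an outgoing-arrow swap rather than an incoming one. The same case analysis on the cycles containing $\alpha,\beta$, with the roles of preimages and images exchanged, goes through verbatim. The main obstacle is the bookkeeping of the many subcases; no individual case is conceptually hard, and the same-cycle open-split already exhibits the key structural phenomenon (an open cycle yielding an open cycle plus a finite one, with no net change in the open count).
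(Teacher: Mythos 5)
Your argument is correct and follows essentially the same route as the paper's: reduce to a single transposition $h=(\alpha,\beta)$ and analyze how the swap splits or merges the (at most two) cycles of $f$ meeting $\{\alpha,\beta\}$, checking in each configuration that the number of open cycles is unchanged. The paper shortens the bookkeeping in two ways worth noting. First, since a transposition satisfies $f = hhf$, it suffices to prove the one-sided inequality $(f)\C_{\mathrm{open}} \leq (hf)\C_{\mathrm{open}}$ for all $f$ and all transpositions $h$; one then only needs to exhibit, for each open cycle of $f$, a corresponding open cycle of $hf$, rather than verify exact preservation in every subcase (in particular, the subcases where no open cycle is involved need no attention at all). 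Second, part (ii) is deduced from part (i) with no further case analysis, via $fh = h^{-1}(hf)h$ together with Proposition~\ref{decomposition}: the two compositions are conjugate, hence have equivalent cycle decompositions. This is cleaner than your plan to rerun the analysis ``dually,'' and your claim that the dual argument goes through \emph{verbatim} is slightly optimistic: reversing arrows is not a genuine symmetry of the functional digraph of an injective non-surjective map, since forward cycles have distinguished heads with no preimage, so the incoming-arrow and outgoing-arrow swaps have different degenerate subcases (you acknowledge these edge cases, and they do all work out, but they would need to be written out).
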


\begin{proof}
(i) Since $h$ can be written as a product of transpositions, it is
enough to show this in the case where $h$ is a transposition.
Further, since under this assumption $f = hhf,$ it is enough to
show that $(f)\C_{\mathrm{open}} \leq (hf)\C_{\mathrm{open}}.$
This is clear if $h$ fixes all the elements in the open cycles of $f,$ so suppose that $(\alpha)h = \beta \neq \alpha$ for some $\alpha \in \Sigma,$
where $\Sigma$ is an open cycle of $f.$ We consider several
different cases.

Suppose that $\beta \in \Sigma.$ Without loss of generality, we
may assume that $\beta = (\alpha)f^n$ for some $n \in \Z_+.$ Then
$\Sigma \setminus \{(\alpha)f, (\alpha)f^2, \dots, (\alpha)f^n\}$
is an open cycle of $hf.$ Since $h$ is a transposition, all open
cycles of $f$ other than $\Sigma$ are open cycles of $hf.$ Hence
$(f)\C_{\mathrm{open}} \leq (hf)\C_{\mathrm{open}}.$

Suppose instead that $\beta \notin \Sigma,$ and let $\Gamma$ be
the cycle of $f$ that contains $\beta.$ If $\Gamma$ is finite,
then $\Sigma \cup \Gamma$ is an open cycle under $hf.$ If $\Gamma$
is a forward cycle, then $$\{(\beta)f^n : n \in \Z_+\}\cup
\{\gamma \in \Omega : \exists n \in \N \, ((\gamma)f^n =
\alpha)\}$$ is an open cycle under $hf$, in place of $\Sigma$. If $\Gamma$ is an open cycle of $f,$ then $$\{(\beta)f^n : n \in \Z_+\}\cup \{\gamma \in
\Omega : \exists n \in \N \, ((\gamma)f^n = \alpha)\}$$ and
$$\{(\alpha)f^n : n \in \Z_+\}\cup \{\gamma \in \Omega : \exists n
\in \N \, ((\gamma)f^n = \beta)\}$$ are open cycles of $hf$, in place of $\Sigma$ and $\Gamma$. Again, in each of these three cases, $(f)\C_{\mathrm{open}} \leq (hf)\C_{\mathrm{open}}.$

(ii) By part (i), we have $(f)\C_{\mathrm{open}} =
(hf)\C_{\mathrm{open}}.$ Also, Proposition~\ref{decomposition}
implies that $(hf)\C_{\mathrm{open}} =
(h^{-1}(hf)h)\C_{\mathrm{open}}.$ But, the latter is just
$(fh)\C_{\mathrm{open}},$ which completes the proof.
\end{proof}

\begin{corollary}\label{fin.equiv.1}
Let $f, g \in \Inj(\Omega)$ be any two maps, and suppose that $f = hh_1gh_2h^{-1}$ for some $h \in \Sym(\Omega)$ and $h_1, h_2 \in \Fin(\Omega).$ Then $f \approx_{\mathrm{fin}} g.$
\end{corollary}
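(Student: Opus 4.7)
My plan is to reduce the corollary to the case of a single transposition and then verify each of the four conditions of $\approx_{\mathrm{fin}}$ in turn. First, since $f = hh_1gh_2h^{-1}$ is a conjugate of $h_1gh_2$, Proposition~\ref{decomposition} tells us that the two have equivalent cycle decompositions, and in particular $f \approx_{\mathrm{fin}} h_1gh_2$. As $\approx_{\mathrm{fin}}$ is an equivalence relation, it will suffice to prove $h_1gh_2 \approx_{\mathrm{fin}} g$. Writing each of $h_1, h_2 \in \Fin(\Omega)$ as a product of transpositions and applying transitivity, I can reduce further to proving $tg \approx_{\mathrm{fin}} g$ and $gt \approx_{\mathrm{fin}} g$ for an arbitrary transposition $t$. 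The identity $gt = t^{-1}(tg)t$ together with Proposition~\ref{decomposition} shows that $gt$ and $tg$ have equivalent cycle decompositions, so the entire problem reduces to proving $tg \approx_{\mathrm{fin}} g$.

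Conditions (i) and (ii) of Definition~\ref{fin.equiv.def} will be immediate: (i) is exactly Lemma~\ref{fin.equiv.0.5}(i), while (ii) follows because Lemma~\ref{coimage} gives $|\Omega \setminus (\Omega)(tg)| = |\Omega \setminus (\Omega)g|$ (as $t$ is a permutation) and Lemma~\ref{fwd cycles} then translates this into $(tg)\C_{\mathrm{fwd}} = (g)\C_{\mathrm{fwd}}$.

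The main obstacle will be verifying conditions (iii) and (iv). Writing $t = (\alpha,\beta)$, the key observation is that $tg$ and $g$ agree on $\Omega \setminus \{\alpha, \beta\}$, so I would first check directly from the definition of a cycle that every $g$-cycle disjoint from $\{\alpha, \beta\}$ is also a $tg$-cycle, and conversely. Consequently, the cycle decompositions of $g$ and $tg$ differ only in the at most two cycles on each side that meet $\{\alpha, \beta\}$. Condition (iii) will follow immediately, since $(g)\C_n$ and $(tg)\C_n$ can disagree only when $n$ is the (finite) size of one of those at most four exceptional cycles. For (iv), if $(g)\C_n = \aleph_0$ then all but at most two of the size-$n$ cycles of $g$ must avoid $\{\alpha, \beta\}$ and therefore remain size-$n$ cycles of $tg$, forcing $(tg)\C_n = \aleph_0$; the symmetric argument then shows that $(g)\C_n$ and $(tg)\C_n$ can differ only when both are finite.
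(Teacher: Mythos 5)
Your proof is correct and follows essentially the same route as the paper: reduce to showing $h_1gh_2 \approx_{\mathrm{fin}} g$ via Proposition~\ref{decomposition}, handle condition (i) with Lemma~\ref{fin.equiv.0.5} and condition (ii) with Lemmas~\ref{coimage} and~\ref{fwd cycles}, and settle (iii) and (iv) by observing that the two maps agree outside a finite set. Your extra reduction to a single transposition merely makes explicit the paper's one-line remark that maps differing on only finitely many points have the same finite cycles up to finitely many exceptions.
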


\begin{proof}
By Proposition~\ref{decomposition}, $f \approx_{\mathrm{fin}}
h_1gh_2.$ Thus, it suffices to show that $h_1gh_2 \approx_{\mathrm{fin}}
g$.

By the previous lemma, $(g)\C_{\mathrm{open}} =
(h_1g)\C_{\mathrm{open}} = (h_1gh_2)\C_{\mathrm{open}}.$ Since
$|\Omega \setminus (\Omega)g| = |\Omega \setminus
(\Omega)h_1gh_2|,$ Lemma~\ref{fwd cycles} implies that
$(g)\C_{\mathrm{fwd}} = (h_1gh_2)\C_{\mathrm{fwd}}.$ The
desired conclusion then follows from the fact that $g$ and
$h_1gh_2$ can disagree on only finitely many elements of $\Omega$, which implies that these two maps must have the same finite cycles, except for possibly finitely many.
\end{proof}

\begin{lemma}\label{fin.equiv.2}
Let $f \in \Inj(\Omega)$ be a map that has at least one infinite
cycle in its cycle decomposition, and let $n \in \Z_+$. Then there
exists a transposition $h \in \Fin(\Omega)$ such that $\,
(fh)\C_n = (f)\C_n + 1$ and $\, (fh)\C_m = (f)\C_m$ for
all $m \in \Z_+ \setminus \{n\}.$
\end{lemma}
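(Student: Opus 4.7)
The plan is to construct the required transposition $h$ by surgically pinching off an $n$-cycle from one infinite cycle of $f$, leaving all other finite cycles intact. First I would pick an infinite cycle $\Sigma$ of $f$ (which exists by hypothesis) and choose an element $\alpha \in \Sigma$ carefully: if $\Sigma$ is a forward cycle I take $\alpha$ to be its source (the unique element of $\Sigma \setminus (\Omega)f$ guaranteed by Lemma~\ref{fwd cycles}), and otherwise I pick $\alpha$ arbitrarily. Then I would set $h$ to be the transposition swapping $\alpha$ with $(\alpha)f^n$; these two points are distinct because $\Sigma$ is infinite, so $h$ is a genuine finitary permutation.

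Next I would compute the cycle decomposition of $fh$. Writing $\gamma_i = (\alpha)f^i$ (extending to negative $i$ by backward iteration in the open-cycle case), the transposition $h$ is the identity off $\{\gamma_0, \gamma_n\}$, so the action of $fh$ differs from $f$ only at the preimages of these two points: a direct check gives $(\gamma_{n-1})fh = \gamma_0$, and, in the open case, $(\gamma_{-1})fh = \gamma_n$, while $(\gamma_i)fh = \gamma_{i+1}$ otherwise. Consequently $\{\gamma_0, \ldots, \gamma_{n-1}\}$ becomes an $n$-cycle of $fh$, and the rest of $\Sigma$ forms a new infinite cycle: in the forward case a forward cycle $\{\gamma_n, \gamma_{n+1}, \ldots\}$ with source $\gamma_n$, and in the open case an open cycle $\{\ldots, \gamma_{-1}, \gamma_n, \gamma_{n+1}, \ldots\}$.

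Because $h$ fixes every element of $\Omega \setminus \Sigma$ and $\Sigma$ is invariant under $f$, the map $fh$ agrees with $f$ on $\Omega \setminus \Sigma$, so every cycle of $f$ other than $\Sigma$ appears unaltered in the cycle decomposition of $fh$. Hence the only change in the decomposition is that $\Sigma$ is replaced by one $n$-cycle together with one infinite cycle, giving $(fh)\C_n = (f)\C_n + 1$ and $(fh)\C_m = (f)\C_m$ for every $m \in \Z_+ \setminus \{n\}$, as desired.

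The one subtle point is the choice of $\alpha$ in the forward-cycle case. Taking $\alpha$ to be the source of $\Sigma$ ensures that nothing in $\Omega$ maps to $\gamma_0$ under $f$, and hence (since $(\delta)fh = \gamma_n$ would force $(\delta)f = (\gamma_n)h = \gamma_0$) nothing maps to $\gamma_n$ under $fh$. This is what guarantees that the tail $\{\gamma_n, \gamma_{n+1}, \ldots\}$ really forms a new forward cycle rather than attaching itself back to the pinched-off $n$-cycle. Beyond that detail the argument is a routine verification.
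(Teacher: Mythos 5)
Your proposal is correct and uses essentially the same construction as the paper: transpose a point $\alpha$ of an infinite cycle with $(\alpha)f^n$ to pinch off an $n$-cycle, leaving all other cycles untouched. The only difference is that the paper takes $\alpha$ arbitrary in the infinite cycle (which also works, since the head $\{\gamma_{-1},\gamma_{-2},\dots\}$ simply stays attached to the tail $\{\gamma_n,\gamma_{n+1},\dots\}$ to form the new infinite cycle), whereas your choice of the source in the forward-cycle case merely makes the bookkeeping cleaner.
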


\begin{proof}
Let $\Sigma \subseteq \Omega$ be an infinite cycle of $f.$ Then we
can write $\Sigma = \{\alpha_i : i \in I\},$ where either $I = \Z$
or $I = \Z_+,$ and $(\alpha_i)f = \alpha_{i+1}$ for all $i \in I.$
Let us fix an element $\alpha_i \in \Sigma$ and define $h \in
\Fin(\Omega)$ by $(\alpha_i)h = \alpha_{i+n},$ $(\alpha_{i+n})h =
\alpha_i,$ and $(\alpha)h = \alpha$ for all $\alpha \in \Omega
\setminus \{\alpha_i, \alpha_{i+n}\}.$ Then $\{\alpha_i, \dots,
\alpha_{i+n-1}\}$ and $\Sigma \setminus \{\alpha_i, \dots,
\alpha_{i+n-1}\}$ become cycles under $fh$, in place of $\Sigma$, and otherwise $f$ and $fh$ have the same cycle decomposition.
\end{proof}

\begin{lemma}\label{fin.equiv.3}
Let $f \in \Inj(\Omega)$ be a map that has at least one infinite
cycle in its cycle decomposition, and let $n \in \Z_+$ be such
that $\, (f)\C_n > 0.$ Then there exists a transposition $h \in
\Fin(\Omega)$ such that $\, (fh)\C_n = (f)\C_n - 1$ and $\,
(fh)\C_m = (f)\C_m$ for all $m \in \Z_+ \setminus \{n\}.$
\end{lemma}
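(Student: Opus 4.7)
The plan is to produce $h$ as a transposition that merges an $n$-cycle of $f$ into one of its infinite cycles, inverting the splitting operation used in the proof of Lemma~\ref{fin.equiv.2}. Since $(f)\C_n > 0$, there is an $n$-cycle of $f$ that I enumerate as $\Gamma = \{\beta_0,\beta_1,\dots,\beta_{n-1}\}$ with $(\beta_j)f = \beta_{(j+1) \bmod n}$; and since $f$ has at least one infinite cycle, I may pick such a cycle $\Sigma = \{\alpha_i : i \in I\}$ with $(\alpha_i)f = \alpha_{i+1}$, where $I = \Z$ if $\Sigma$ is open and $I = \N$ (with $\alpha_0 \notin (\Omega)f$) if $\Sigma$ is forward. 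I would then define $h \in \Fin(\Omega)$ to be the transposition swapping $\beta_0$ and some fixed $\alpha_i \in \Sigma$, choosing $i \geq 1$ when $I = \N$ so that $\alpha_{i-1}$ exists.

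Next I would read off the cycle decomposition of $fh$ directly. Because $h$ is the identity outside $\{\beta_0, \alpha_i\}$, every cycle of $f$ disjoint from $\Sigma \cup \Gamma$ remains a cycle of $fh$. Within $\Sigma \cup \Gamma$, only two arrows change: $(\beta_{n-1})f = \beta_0$ becomes $(\beta_{n-1})fh = \alpha_i$, and $(\alpha_{i-1})f = \alpha_i$ becomes $(\alpha_{i-1})fh = \beta_0$. Tracing the resulting orbit produces $\cdots \mapsto \alpha_{i-1} \mapsto \beta_0 \mapsto \beta_1 \mapsto \cdots \mapsto \beta_{n-1} \mapsto \alpha_i \mapsto \alpha_{i+1} \mapsto \cdots$, showing that $\Sigma \cup \Gamma$ coalesces into a single infinite cycle of $fh$ while $\Gamma$ has been absorbed.

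Finally I must check that this merged infinite cycle has the same type (open or forward) as $\Sigma$, so that it contributes nothing to $(fh)\C_m$ for any $m \in \Z_+$. Since $h$ is a permutation, Lemma~\ref{coimage} gives $|\Omega \setminus (\Omega)fh| = |\Omega \setminus (\Omega)f|$, and Lemma~\ref{fwd cycles} then yields $(fh)\C_{\mathrm{fwd}} = (f)\C_{\mathrm{fwd}}$; since every forward cycle of $f$ other than $\Sigma$ is preserved verbatim in $fh$, the merged cycle must be forward exactly when $\Sigma$ is, and otherwise open. Combining with the previous paragraph gives $(fh)\C_n = (f)\C_n - 1$ and $(fh)\C_m = (f)\C_m$ for all $m \in \Z_+ \setminus \{n\}$. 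I do not foresee any real obstacle; the argument is almost pure bookkeeping, dual to that of Lemma~\ref{fin.equiv.2}, and the only mild care needed is to choose $i$ so that the transposition actually touches an interior point of $\Sigma$, which is handled in the setup above.
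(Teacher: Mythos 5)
Your proposal is correct and follows essentially the same route as the paper: pick an $n$-cycle and an infinite cycle, let $h$ transpose a point of each, and observe that $fh$ merges the two into a single infinite cycle while leaving all other cycles untouched. Your extra care in choosing $i\geq 1$ for a forward cycle is harmless but unnecessary (if $i=0$ the merged cycle is simply the forward cycle beginning at $\beta_0$), and your counting argument via Lemmas~\ref{coimage} and~\ref{fwd cycles} for the open/forward type is a slightly more indirect but valid substitute for just reading the type off the merged orbit.
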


\begin{proof}
Let $\Sigma \subseteq \Omega$ be an infinite cycle and $\{\beta_1, \dots, \beta_n\}$ an  $n$-cycle in the cycle decomposition of $f$. Let us fix an element $\alpha \in \Sigma$ and define $h \in \Fin(\Omega)$ by $(\alpha)h = \beta_1,$ $(\beta_1)h = \alpha,$ and $(\gamma)h = \gamma$ for all $\gamma \in \Omega \setminus \{\alpha, \beta_1\}.$ Then $\Sigma \cup \{\beta_1, \dots, \beta_n\}$ becomes a cycle under $fh,$ and otherwise $f$ and $fh$ have the same cycle decomposition. Thus,
$fh$ has one fewer $n$-cycle than $f$ but the same finite cycles of other cardinalities.
\end{proof}

Putting together the last four results, we obtain our description
of $\approx_{\mathrm{fin}}$ in terms of composition with elements
of $\Fin(\Omega)$, for maps having infinite cycles.

\begin{proposition}\label{fin.equiv.main}
Let $f, g \in \Inj(\Omega)$ be any two elements, each having at least one infinite cycle in its cycle decomposition. Then $f \approx_{\mathrm{fin}} g$ if and only if $f = hh_1gh_2h^{-1}$ for some $h \in \Sym(\Omega)$ and $h_1, h_2 \in \Fin(\Omega).$
\end{proposition}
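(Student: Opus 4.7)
The forward implication ($f = h h_1 g h_2 h^{-1} \Rightarrow f \approx_{\mathrm{fin}} g$) is immediate from Corollary~\ref{fin.equiv.1}, so the content of the proposition lies entirely in the reverse implication. My plan is to start from $g$ and modify it by right-multiplying by a finite sequence of transpositions until its cycle decomposition agrees with that of $f$, and then apply Proposition~\ref{decomposition} to produce the conjugating permutation $h$; the finitary factor $h_1$ will not even be needed, so the conclusion will hold with $h_1 = 1$.

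Concretely, assume $f \approx_{\mathrm{fin}} g$, with both maps having at least one infinite cycle. By conditions (iii) and (iv) of Definition~\ref{fin.equiv.def}, the set $S = \{n \in \Z_+ : (f)\C_n \neq (g)\C_n\}$ is finite and each of the integers $d_n := (f)\C_n - (g)\C_n$ (for $n \in S$) is finite. For each $n \in S$, I will apply Lemma~\ref{fin.equiv.2} exactly $d_n$ times if $d_n > 0$, or Lemma~\ref{fin.equiv.3} exactly $-d_n$ times if $d_n < 0$, to produce transpositions whose product $h_2 \in \Fin(\Omega)$ satisfies $(g h_2)\C_n = (f)\C_n$ for every $n \in \Z_+$. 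The key verification at each step is that the lemmas remain applicable, i.e., that the current intermediate map still has an infinite cycle in its decomposition: inspection of the constructions in Lemmas~\ref{fin.equiv.2} and~\ref{fin.equiv.3} shows that they respectively split an infinite cycle into a finite $n$-cycle and an infinite cycle, or merge an $n$-cycle into an infinite cycle, so an infinite cycle always survives. Hence the iteration can be carried out to completion.

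It remains to check that $g h_2$ has a cycle decomposition equivalent to that of $f$ in the sense of Proposition~\ref{decomposition}. The finite cycle counts already match by construction. For the infinite part: since $g$ and $g h_2$ differ by right multiplication by a permutation, $|\Omega \setminus (\Omega) g h_2| = |\Omega \setminus (\Omega) g| = |\Omega \setminus (\Omega) f|$, so by Lemma~\ref{fwd cycles} the forward cycle counts of $g h_2$ and $f$ coincide; and Lemma~\ref{fin.equiv.0.5}(ii) applied iteratively yields $(gh_2)\C_{\mathrm{open}} = (g)\C_{\mathrm{open}} = (f)\C_{\mathrm{open}}$. Proposition~\ref{decomposition} then gives $h \in \Sym(\Omega)$ with $f = h (g h_2) h^{-1} = h \cdot 1 \cdot g \cdot h_2 \cdot h^{-1}$, completing the proof. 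The only genuine obstacle is the preservation of an infinite cycle throughout the iteration, which is precisely what the hypothesis on $g$ guarantees.
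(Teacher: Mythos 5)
Your proposal is correct and follows essentially the same route as the paper's proof: right-multiply $g$ by transpositions supplied by Lemmas~\ref{fin.equiv.2} and~\ref{fin.equiv.3} to match the finite cycle counts, use Lemma~\ref{fin.equiv.0.5} and Lemma~\ref{fwd cycles} to see that the open and forward cycle counts are automatically preserved, and finish with Proposition~\ref{decomposition} (so that $h_1=1$ suffices, exactly as in the paper). You merely spell out more explicitly than the paper does why an infinite cycle survives each step, which is a worthwhile detail but not a different argument.
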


\begin{proof}
By Corollary~\ref{fin.equiv.1}, we only need to show the forward
implication, so let us assume that $f \approx_{\mathrm{fin}} g.$
Repeatedly applying the previous two lemmas, we can find a finite
sequence of transpositions $h_1, \dots, h_n \in \Fin(\Omega)$ such that $gh_1 \dots h_n$ has a cycle decomposition equivalent to that of $f.$ (For any $h_1, \dots, h_n \in \Fin(\Omega)$, we have $(gh_1 \dots h_n)\C_{\mathrm{open}} = (f)\C_{\mathrm{open}}$, by Lemma~\ref{fin.equiv.0.5}, and $(gh_1 \dots h_n)\C_{\mathrm{fwd}} = (f)\C_{\mathrm{fwd}}$, by Lemma~\ref{fwd cycles}.) The result then follows from Proposition~\ref{decomposition}.
\end{proof}

In the above proposition, the assumption that $f$ and $g$ both have an
infinite cycle is necessary. For instance, let $f \in \Sym(\Omega)$ be an element such that $(f)\C_{\mathrm{open}} = 0$ and $(f)\C_n = 1$ for all $n \in \Z_+,$ and let $g \in \Sym(\Omega)$ be an element such that $(g)\C_{\mathrm{open}} = 0,$ $(g)\C_1 = 2,$ and $(g)\C_n = 1$ for all $n >1.$ Then, clearly, $f \approx_{\mathrm{fin}} g.$ But, $f \neq hh_1gh_2h^{-1}$ for all $h \in \Sym(\Omega)$ and $h_1, h_2 \in \Fin(\Omega).$ For, supposing otherwise, there exist $h_1, h_2 \in \Fin(\Omega)$ such that $h_1gh_2$ has a cycle decomposition equivalent to that of $f,$ by Proposition~\ref{decomposition}. Let us list the cycles of $g$ as $\{\alpha_1\}, \{\alpha_2\}, \{\alpha_3, \alpha_4\}, \{\alpha_5, \alpha_6, \alpha_7\}, \{\alpha_8, \alpha_9, \alpha_{10}, \alpha_{11}\}, \dots,$ where
$\Omega = \{\alpha_i : i \in \Z_+\}.$ Since $g$ and $h_1gh_2$ can
disagree on only finitely many elements of $\Omega,$ there is a positive integer $n$ such that for all $i > n$ we have $(\alpha_i)g =
(\alpha_i)h_1gh_2,$ and such that $\alpha_{n+1}$ is the element
with the least index in some cycle of $g.$ Let us denote the
cardinality of the cycle to which $\alpha_{n+1}$ belongs by $m.$
Then, by our definition of $f,$ $\{\alpha_1, \dots, \alpha_n\}$
must contain exactly one cycle of $h_1gh_2$ of each cardinality
less than $m$ and no other cycles. Comparing this with our cycle
decomposition for $g$ yields a contradiction (since $\{\alpha_1,
\dots, \alpha_n\}$ contains two cycles of $g$ of cardinality $1,$
in addition to a cycle of each cardinality less than $m$ but
greater than $1$).

We are now ready to describe the normal submonoids $M \subseteq
\Inj(\Omega)$ having the property that $M \cap \Sym(\Omega) = \Fin(\Omega).$

\begin{proposition}\label{fin.classif}
Let $M \subseteq \Inj(\Omega)$ be any submonoid such that $M \cap
\Sym(\Omega) = \Fin(\Omega).$ Then $M$ is normal if and only if either $M = \Fin(\Omega) \cup M_{\mathrm{fin}}$ or $M = \Fin(\Omega) \cup M_{\mathrm{fin}} \cup \Inj_{\infty}(\Omega)$, where $$M_{\mathrm{fin}} = B \cup \{f \in \Inj(\Omega) : |\Omega \setminus (\Omega)f| \in N \setminus (\Gen(N) \cup \{0\})\},$$ for some additive submonoid $N \subseteq \N$ and some $\, \approx_{\mathrm{fin}}$-closed subset $B \subseteq \Inj_{\mathrm{fin}}(\Omega)$ that satisfies $B_\N = \Gen(N)$.
\end{proposition}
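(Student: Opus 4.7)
The plan is to prove both directions by a strategy parallel to Proposition~\ref{gp=fin}, with the crucial difference that $B$ must now be $\approx_{\mathrm{fin}}$-closed rather than merely conjugation-closed, reflecting the presence of $\Fin(\Omega)$ inside $M$.

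For the forward direction, I will set $N = M_\N$ and $B = \{f \in M : |\Omega \setminus (\Omega)f| \in \Gen(N)\}$. That $B_\N = \Gen(N)$ is immediate from $M_\N = N$. To prove $B$ is $\approx_{\mathrm{fin}}$-closed, I take $f \in B$ and $g \approx_{\mathrm{fin}} f$: both lie in $\Inj_{\mathrm{fin}}(\Omega)$ (their common value of $\C_{\mathrm{fwd}}$ is finite and positive), and hence by Lemma~\ref{fwd cycles} each has at least one infinite cycle, so Proposition~\ref{fin.equiv.main} applies to yield $f = h h_1 g h_2 h^{-1}$ with $h \in \Sym(\Omega)$ and $h_1, h_2 \in \Fin(\Omega)$; rearranging to $g = h_1^{-1} h^{-1} f h h_2^{-1}$, using $\Fin(\Omega) \subseteq M$ and the normality of $M$, puts $g$ in $M$, and the coimage condition places it in $B$. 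The identity $M_{\mathrm{fin}} = B \cup \{f : |\Omega \setminus (\Omega)f| \in N \setminus (\Gen(N) \cup \{0\})\}$ then reduces to showing that every $f$ with coimage in $N \setminus (\Gen(N) \cup \{0\})$ lies in $M$: a non-generator is expressible as a sum $n_1 + n_2$ of two positive elements of $N$ (by the minimality argument from the proof of the preceding lemma on $\Gen$), and picking $g_1, g_2 \in M_{\mathrm{fin}}$ with $|\Omega \setminus (\Omega)g_i| = n_i$ and applying Theorem~\ref{composition} produces $f \in M$. Finally, Corollary~\ref{inf.coimage.monoid} handles the $\Inj_\infty(\Omega)$ dichotomy.

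For the converse, given $N$ and a $\approx_{\mathrm{fin}}$-closed $B$ as in the statement, I need to check that $M$ is a submonoid and is normal. The nontrivial composition checks are: $M_{\mathrm{fin}} \cdot M_{\mathrm{fin}} \subseteq M_{\mathrm{fin}}$, which follows from Lemma~\ref{coimage} together with the observation that the sum of two positive elements of $N$ cannot lie in $\Gen(N)$; and $\Fin(\Omega) \cdot B \subseteq B$ (and symmetrically on the right), handled by Corollary~\ref{fin.equiv.1} with trivial conjugator to get $hf \approx_{\mathrm{fin}} f$, whence $hf \in B$ by $\approx_{\mathrm{fin}}$-closure. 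Closure involving $\Inj_\infty(\Omega)$ is a direct application of Lemma~\ref{coimage}. For normality, the only case needing comment is $a B a^{-1} \subseteq B$ for $a \in \Sym(\Omega)$: $afa^{-1}$ has a cycle decomposition equivalent to that of $f$ by Proposition~\ref{decomposition}, so $afa^{-1} \approx_{\mathrm{fin}} f$, and $\approx_{\mathrm{fin}}$-closure of $B$ yields $afa^{-1} \in B$; the second summand of $M_{\mathrm{fin}}$ is visibly normal, being defined purely in terms of coimage cardinality.

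The main obstacle is the $\approx_{\mathrm{fin}}$-closure of $B$ in the forward direction, which is precisely where the hypothesis $\Fin(\Omega) \subseteq M$ gets converted into a structural constraint on $B$. It is also where the restriction $B \subseteq \Inj_{\mathrm{fin}}(\Omega)$ becomes essential, since Proposition~\ref{fin.equiv.main} requires each element to possess at least one infinite cycle; for elements of $\Inj_{\mathrm{fin}}(\Omega)$ this is automatic by Lemma~\ref{fwd cycles}, but it would fail in general, as the example following Proposition~\ref{fin.equiv.main} illustrates.
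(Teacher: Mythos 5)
Your proposal is correct and follows essentially the same route as the paper: take $N = M_\N$ and $B = \{f \in M : |\Omega \setminus (\Omega)f| \in \Gen(N)\}$, use Proposition~\ref{fin.equiv.main} (together with $\Fin(\Omega) \subseteq M$ and normality) to get $\approx_{\mathrm{fin}}$-closure of $B$, Theorem~\ref{composition} to capture the non-generator coimages, and Corollary~\ref{inf.coimage.monoid} for the $\Inj_\infty(\Omega)$ dichotomy, with the converse checked via Lemma~\ref{coimage} and Corollary~\ref{fin.equiv.1}. You are in fact more explicit than the paper on a couple of points it leaves implicit (the rearrangement placing $g$ in $M$, and the characterization of $\Gen(N)$ as the nonzero elements not expressible as a sum of two nonzero elements of $N$), which is fine.
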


\begin{proof}
This proof is very similar to that of Proposition~\ref{gp=fin}.

Suppose that $M$ is normal. Let $N = M_\N,$ and set $$B = \{f \in M : |\Omega \setminus (\Omega)f| \in \Gen(N)\}.$$ Since $M$ is normal and contains $\Fin(\Omega)$, $B$ is $\approx_{\mathrm{fin}}$-closed, by Proposition~\ref{fin.equiv.main}, and it clearly satisfies $B_\N = \Gen(N)$. Further, if $h \in \Inj(\Omega)$ is any element such that $|\Omega \setminus (\Omega)h| \in M_\N \setminus (\Gen(M_\N) \cup \{0\}),$ then $h \in M,$ by Theorem~\ref{composition}. Hence, $$B \cup \{f \in \Inj(\Omega) : |\Omega \setminus (\Omega)f| \in N \setminus (\Gen(N) \cup
\{0\})\} = M \cap \Inj_{\mathrm{fin}} (\Omega).$$ The desired
conclusion then follows from Corollary~\ref{inf.coimage.monoid}.

For the converse, suppose that $M_{\mathrm{fin}}$ has the form specified in the statement. First, we note that this set is a subsemigroup of $\Inj(\Omega)$. For, given any two elements $f, g \in M_{\mathrm{fin}}$, we have $$fg \in \{f \in \Inj(\Omega) : |\Omega \setminus (\Omega)f| \in N \setminus (\Gen(N) \cup \{0\})\},$$ by Lemma~\ref{coimage}. Further, by the same lemma and Proposition~\ref{fin.equiv.main}, $M_{\mathrm{fin}}$ is closed under multiplication by elements of $\Fin(\Omega)$, and, as always, $\Inj_{\infty}(\Omega)$ is closed under multiplication by elements of $\Fin(\Omega) \cup M_{\mathrm{fin}}$. Therefore $M$ is indeed a submonoid. Since $\Fin(\Omega)$, $B$, $$\{f \in \Inj(\Omega) : |\Omega \setminus (\Omega)f| \in N \setminus (\Gen(N) \cup \{0\})\},$$ and $\Inj_{\infty}(\Omega)$ are normal, it follows that $M$ is as well.
\end{proof}

\section{Monoids containing $\Alt(\Omega)$}

This section is devoted to submonoids $M \subseteq \Inj(\Omega)$
satisfying $M \cap \Sym(\Omega) = \Alt(\Omega).$ As in the previous
section, we shall first describe how composition with elements of
$\Alt(\Omega)$ affects the cycle decomposition of an arbitrary
element of $\Inj(\Omega).$

\begin{lemma}\label{even.fin}
Let $h \in \Fin(\Omega)$ be any element, and let $f \in \Inj(\Omega)$ be a map that satisfies either of the following conditions:
\begin{enumerate}
\item[$(${\rm i}$)$] $(f)\C_{\mathrm{open}} + (f)\C_{\mathrm{fwd}} \geq 2;$
\item[$(${\rm ii}$)$] $(f)\C_{\mathrm{open}} + (f)\C_{\mathrm{fwd}} \geq 1$ and $\, (f)\C_n = \aleph_0$ for some $n \in \Z_+$.
\end{enumerate}
Then there exists a map $g \in \Alt(\Omega)$ such that $fh$ and
$fg$ have equivalent cycle decompositions $($and hence so do $hf$
and $gf$$)$.
\end{lemma}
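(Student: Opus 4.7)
The plan is to reduce, in the case when $h$ is odd, to finding a transposition $\tau \in \Fin(\Omega)$ such that $f'\tau$ has cycle decomposition equivalent to $f'$, where $f' = fh$; then $g = h\tau$ will be even (hence in $\Alt(\Omega)$), and $fg = f'\tau$ will have cycle decomposition equivalent to $fh$, as required. If $h$ is already in $\Alt(\Omega)$, simply take $g = h$. For the parenthetical statement about $hf$ and $gf$, one observes that $hf = h(fh)h^{-1}$ and $gf = g(fg)g^{-1}$, so $hf$ is conjugate to $fh$ and $gf$ to $fg$, and equivalence of cycle decompositions is invariant under conjugation by Proposition~\ref{decomposition}.

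Before constructing $\tau$, I would verify that $f'$ inherits the hypothesis from $f$: by Lemma~\ref{fin.equiv.0.5} and Lemma~\ref{fwd cycles}, together with the fact that $h$ is a permutation (so $|\Omega \setminus (\Omega)f'| = |\Omega \setminus (\Omega)f|$), one gets $(f')\C_{\mathrm{open}} = (f)\C_{\mathrm{open}}$ and $(f')\C_{\mathrm{fwd}} = (f)\C_{\mathrm{fwd}}$; and since $f$ and $f'$ agree outside a finite set, $(f')\C_n = (f)\C_n$ whenever the latter equals $\aleph_0$. So it suffices to exhibit the desired transposition for $f'$.

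Under condition (i), let $\Sigma, \Gamma$ be two distinct infinite cycles of $f'$. I would split into three subcases by type. If both are open, pick any $\alpha \in \Sigma$ and $\beta \in \Gamma$ and let $\tau = (\alpha\ \beta)$; a short direct calculation shows that the two open cycles recombine into two new open cycles under $f'\tau$, with all other cycles unchanged. If both are forward, take $\alpha$ and $\beta$ to be the starting elements (those outside $(\Omega)f'$); then $\tau$ fixes every element of $(\Omega)f'$, so $f'\tau = f'$ as functions. If one is open and the other forward, take $\beta$ to be the starting element of the forward cycle and $\alpha$ any element of the open cycle; one checks that the ``tail'' of the open cycle concatenates with the forward cycle to form a new open cycle of $f'\tau$, while the ``head'' of the open cycle becomes a new forward cycle. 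Under condition (ii), apply the construction of Lemma~\ref{fin.equiv.2} to the infinite cycle of $f'$: the resulting transposition $\tau$ satisfies $(f'\tau)\C_n = (f')\C_n + 1$ with all other counts unchanged, and the residual infinite cycle retains its type (open or forward), as one sees directly from the construction. Since $(f')\C_n = \aleph_0$, $(f'\tau)\C_n = \aleph_0 + 1 = \aleph_0 = (f')\C_n$, so the decompositions of $f'$ and $f'\tau$ are equivalent.

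The main obstacle is the case analysis under condition (i): each of the three subcases requires a small direct verification that the transposition surgery produces a pair of infinite cycles of the correct matching types, so that the resulting decomposition is equivalent to the original. Condition (ii) is comparatively easy, relying only on the fact that $\aleph_0 + 1 = \aleph_0$ together with Lemma~\ref{fin.equiv.2}.
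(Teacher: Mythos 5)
Your proposal is correct and follows essentially the same route as the paper: reduce to the case $h$ odd, find a transposition $\tau$ (the paper calls it $a$) with $fh\tau$ having a cycle decomposition equivalent to that of $fh$ --- obtained by joining two infinite cycles of $fh$ under condition (i), or by invoking Lemma~\ref{fin.equiv.2} together with $\aleph_0+1=\aleph_0$ under condition (ii) --- and set $g=h\tau$, handling the parenthetical claim via conjugation and Proposition~\ref{decomposition}. The only difference is presentational: you spell out the three type subcases for condition (i) explicitly, whereas the paper picks the two points arbitrarily and defers the verification to the argument of Lemma~\ref{fin.equiv.0.5}.
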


\begin{proof}
If $h \in \Alt(\Omega),$ then there is nothing to prove. Let us
therefore assume that $h \in \Fin(\Omega) \setminus \Alt(\Omega).$
Then for any transposition $a \in \Fin(\Omega),$ we have $ha \in
\Alt(\Omega).$ In both cases, we shall define $g = ha$, using an appropriate transposition $a.$

Now, assume that $f$ satisfies (i). Then, by Corollary~\ref{fin.equiv.1}, $fh$ must have at least two infinite cycles in its cycle decomposition. Let $\Sigma, \Gamma \subseteq \Omega$ be such (distinct) cycles, and let us pick $\sigma \in \Sigma$ and $\gamma \in \Gamma$ arbitrarily. Let $a
\in \Fin(\Omega)$ be the transposition that interchanges $\sigma$
and $\gamma,$ and fixes all other elements of $\Omega.$ Then $ha \in
\Alt(\Omega)$, and $fha$ has a cycle decomposition equivalent to that of $fh$ (by the same argument as in the proof of Lemma~\ref{fin.equiv.0.5}). 

Next, assume that $f$ satisfies (ii). Again, by Corollary~\ref{fin.equiv.1}, $fh$ must have at least one infinite cycle in its cycle decomposition and satisfy $(fh)\C_n = \aleph_0$. Thus, by Lemma~\ref{fin.equiv.2}, there exists a transposition $a \in \Fin(\Omega)$ such that $(fha)\C_n = (fh)\C_n + 1$ and $(fha)\C_m = (fh)\C_m$ for all $m \in \Z_+ \setminus \{n\}.$ Since $(fh)\C_n = \aleph_0$, it follows (by Corollary~\ref{fin.equiv.1}, once more) that $fha$ has a cycle decomposition equivalent to that of $fh$, as before.

The parenthetical statement follows from the fact that for any $f
\in \Inj(\Omega)$ and $h \in \Sym(\Omega),$ $hf$ and $fh =
h^{-1}(hf)h$ have equivalent cycle decompositions, by
Proposition~\ref{decomposition}.
\end{proof}

With the above lemma and the results of the previous section in
mind, to describe the effect of composing elements of $\Alt(\Omega)$ with
elements of $\Inj(\Omega) \setminus \Sym(\Omega)$ we only need to
consider maps having exactly one infinite cycle and finitely many $n$-cycles for each $n \in \Z_+$. The following equivalence relation will help us accomplish the task.

\begin{definition} \label{even.equiv.def}
Given any two maps $f,g \in \Inj(\Omega),$ let us write $f
\approx_{\mathrm{even}} g$ if $f \approx_{\mathrm{fin}} g$ and
$$\sum_{n \in \Z_+} ((f)\C_n - (g)\C_n)$$ is an even integer. $($Here $(f)\C_n - (g)\C_n$ is understood to be $\, 0$ whenever $\, (f)\C_n = (g)\C_n$, even if both cardinals are infinite.$)$

Further, we shall say that a subset $B \subseteq \Inj(\Omega)$ is $\, \approx_{\mathrm{even}}$-closed if for all $f, g \in \Inj(\Omega)$ such that $f \approx_{\mathrm{even}} g$, $f \in B$ if and only if $g \in B.$
\end{definition}

\begin{lemma}\label{even.equiv.rel}
The binary relation $\, \approx_{\mathrm{even}}$ on elements of $\,
\Inj(\Omega)$ is an equivalence relation.
\end{lemma}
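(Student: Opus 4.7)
The plan is to verify reflexivity, symmetry, and transitivity directly from Definition~\ref{even.equiv.def}. For reflexivity, $f \approx_{\mathrm{fin}} f$ is trivial, and the sum $\sum_{n \in \Z_+}((f)\C_n - (f)\C_n)$ is $0$, an even integer, using the convention that each term is $0$ when the two cardinals agree.

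For symmetry, note that $\approx_{\mathrm{fin}}$ is obviously symmetric from its definition. If $f \approx_{\mathrm{even}} g$, then for each $n$ with $(f)\C_n \ne (g)\C_n$, clause (iv) of Definition~\ref{fin.equiv.def} guarantees that both cardinals are finite, so $(g)\C_n - (f)\C_n$ is just $-((f)\C_n - (g)\C_n)$ in $\Z$, and for the remaining $n$ both differences are $0$ by convention. Summing gives $\sum_n((g)\C_n - (f)\C_n) = -\sum_n((f)\C_n - (g)\C_n)$, which is again an even integer.

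The nontrivial part is transitivity. Suppose $f \approx_{\mathrm{even}} g$ and $g \approx_{\mathrm{even}} h$. Then $f \approx_{\mathrm{fin}} g \approx_{\mathrm{fin}} h$, and since $\approx_{\mathrm{fin}}$ was already noted to be an equivalence relation, $f \approx_{\mathrm{fin}} h$. It remains to check that $\sum_n((f)\C_n - (h)\C_n)$ is an even integer. The key identity to establish, for each fixed $n \in \Z_+$, is
\[
(f)\C_n - (h)\C_n \;=\; ((f)\C_n - (g)\C_n) + ((g)\C_n - (h)\C_n),
\]
where each difference uses the convention from Definition~\ref{even.equiv.def}. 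Once this is shown, summing over $n$ (which is effectively a finite sum, since clause (iii) of the $\approx_{\mathrm{fin}}$ definition ensures that only finitely many indices contribute a nonzero term in any of the three sums) yields $\sum_n((f)\C_n - (h)\C_n)$ as a sum of two even integers, hence even.

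The main obstacle is verifying that identity in the presence of the ``both-infinite-equal-gives-zero'' convention; I would handle it by a short case split on $n$. If $(f)\C_n = (g)\C_n = (h)\C_n$, every term is $0$. If exactly one of the two differences on the right is nonzero, clause (iv) forces the nonzero difference to involve finite cardinals, and the equal pair is then either finite (and the identity is ordinary integer arithmetic) or infinite and equal to the middle term (in which case the other pair must also be infinite and equal, forcing the nonzero side to actually be zero — contradiction, so this subcase does not arise). If both differences on the right are nonzero, then by clause (iv) applied to $(f,g)$ and $(g,h)$, all three of $(f)\C_n$, $(g)\C_n$, $(h)\C_n$ are finite, and the identity is straightforward integer arithmetic. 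This completes the verification, and with reflexivity and symmetry already handled, $\approx_{\mathrm{even}}$ is an equivalence relation.
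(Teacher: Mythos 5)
Your proof is correct and follows essentially the same route as the paper's: reflexivity and symmetry are immediate, and transitivity is reduced to a finite sum of integer differences (the paper works modulo $2$ over a finite index set $I$ outside of which all three cycle counts agree, which amounts to the same computation as your pointwise identity). Your explicit case split on the ``both-infinite-equal-gives-zero'' convention is a bit more careful than the paper's treatment, but it is not a different argument.
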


\begin{proof}
It is clear that $\approx_{\mathrm{even}}$ reflexive and symmetric. Let us then suppose that $f \approx_{\mathrm{even}} g$ and $g \approx_{\mathrm{even}} h$ for some $f,g,h \in \Inj(\Omega)$, and show that $f \approx_{\mathrm{even}} h.$ Since $\approx_{\mathrm{fin}}$ is an
equivalence relation, we only need to show that the integer
$$\sum_{n \in \Z_+} ((f)\C_n - (h)\C_n)$$ is even. Let
$I \subseteq \Z_+$ be a finite set such that for all $n \in \Z_+
\setminus I,$ $(f)\C_n = (g)\C_n = (h)\C_n.$ (Such a set
exists because $f \approx_{\mathrm{fin}} g \approx_{\mathrm{fin}}
h.$) Computing modulo $2,$ we have $$0 \equiv \sum_{n \in \Z_+} ((g)\C_n - (h)\C_n) = \sum_{n \in I} ((g)\C_n - (h)\C_n) = \sum_{n \in I} (g)\C_n - \sum_{n \in I} (h)\C_n.$$ Hence $$\sum_{n \in \Z_+} ((f)\C_n -
(h)\C_n) =  \sum_{n \in I} (f)\C_n -  \sum_{n \in I}
(h)\C_n \equiv \sum_{n \in I} (f)\C_n -  \sum_{n \in I}
(g)\C_n$$ $$= \sum_{n \in \Z_+} ((f)\C_n - (g)\C_n) \equiv 0.$$
\end{proof}

We shall prove that for $f,g \in \Inj(\Omega)$, each having exactly one infinite cycle and finitely many $n$-cycles for each $n \in \Z_+$, $f \approx_{\mathrm{even}} g$ if and only if $f$ is conjugate to $h_1g h_2$ for some $h_1, h_2 \in \Alt(\Omega).$ The argument proceeds through three lemmas.

\begin{lemma}\label{even.alt.1}
Let $f,g \in \Inj(\Omega)$ be any two maps, each having at least one infinite cycle in its cycle decomposition. If $f \approx_{\mathrm{even}} g,$ then $f = hgh'h^{-1}$ for some $h \in \Sym(\Omega)$ and $h' \in \Alt(\Omega).$
\end{lemma}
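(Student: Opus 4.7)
The strategy is to find $h' \in \Alt(\Omega)$ such that $gh'$ has a cycle decomposition equivalent to $f$; once such an $h'$ is produced, Proposition~\ref{decomposition} supplies a permutation $h \in \Sym(\Omega)$ with $f = h(gh')h^{-1} = hgh'h^{-1}$, which is exactly what the lemma asserts. Thus the entire problem reduces to constructing $h'$ as a product of evenly many transpositions, with the parity controlled by the $\approx_{\mathrm{even}}$ hypothesis.

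To build $h'$, note that $f \approx_{\mathrm{fin}} g$ gives a finite set $I \subseteq \Z_+$ outside of which $(f)\C_n = (g)\C_n$, and inside of which all counts are finite. Set $d_n = (f)\C_n - (g)\C_n$ for $n \in I$. First, for each $n \in I$ with $d_n < 0$, apply Lemma~\ref{fin.equiv.3} exactly $|d_n|$ times to remove the excess cycles of size $n$; then, for each $n \in I$ with $d_n > 0$, apply Lemma~\ref{fin.equiv.2} exactly $d_n$ times to create the missing cycles of size $n$. The hypotheses of these lemmas are maintained throughout: since each transposition $h_i \in \Fin(\Omega)$ is a permutation, $(\Omega) g h_1 \cdots h_i = (\Omega) g$, so $(gh_1 \cdots h_i)\C_{\mathrm{fwd}} = (g)\C_{\mathrm{fwd}}$ by Lemma~\ref{fwd cycles}, and iterating Lemma~\ref{fin.equiv.0.5}(ii) gives $(gh_1 \cdots h_i)\C_{\mathrm{open}} = (g)\C_{\mathrm{open}}$; hence at least one infinite cycle is always present, as $g$ has one. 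For the deletion stage, $(g)\C_n = (f)\C_n + |d_n| \geq |d_n|$, so the required $n$-cycles are available. This produces transpositions $h_1, \ldots, h_k \in \Fin(\Omega)$ with $k = \sum_{n \in I} |d_n|$, such that $g h_1 \cdots h_k$ has the same $\C_{\mathrm{open}}$, $\C_{\mathrm{fwd}}$, and $\C_n$ counts as $f$, i.e., has a cycle decomposition equivalent to that of $f$.

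Setting $h' := h_1 \cdots h_k$, it remains to check that $h' \in \Alt(\Omega)$. Using $|d_n| \equiv d_n \pmod 2$, compute
$$k \;=\; \sum_{n \in I} |d_n| \;\equiv\; \sum_{n \in I} d_n \;=\; \sum_{n \in \Z_+} \bigl((f)\C_n - (g)\C_n\bigr) \pmod{2},$$
and the right-hand side is even by the hypothesis $f \approx_{\mathrm{even}} g$. Hence $h'$ is a product of evenly many transpositions, so $h' \in \Alt(\Omega)$. The main point to watch is the bookkeeping in the middle paragraph---verifying that at each step the input to Lemma~\ref{fin.equiv.2} or \ref{fin.equiv.3} really satisfies the required hypotheses---but this follows automatically once one observes that right multiplication by a permutation preserves both the image complement and the open-cycle count.
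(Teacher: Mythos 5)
Your proof is correct and follows essentially the same route as the paper's: repeatedly apply Lemmas~\ref{fin.equiv.2} and~\ref{fin.equiv.3} to produce transpositions $h_1,\dots,h_k$ with $gh_1\cdots h_k$ having a cycle decomposition equivalent to that of $f$, then invoke Proposition~\ref{decomposition}. Your explicit count $k=\sum_{n\in I}|d_n|\equiv\sum_n((f)\C_n-(g)\C_n)\pmod 2$ is in fact a welcome sharpening of the paper's briefer assertion that the transpositions ``can be picked so that $m$ is even,'' and your verification that the open and forward cycle counts are preserved (via Lemma~\ref{fin.equiv.0.5} and Lemma~\ref{fwd cycles}) matches the parenthetical argument the paper gives in the proof of Proposition~\ref{fin.equiv.main}.
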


\begin{proof}
Suppose that $f \approx_{\mathrm{even}} g$. Repeatedly applying Lemmas~\ref{fin.equiv.2} and~\ref{fin.equiv.3}, we can find a finite sequence $h_1, \dots, h_m \in \Fin(\Omega)$ of transpositions such that $gh_1 \dots h_m$ has a cycle decomposition equivalent to that of $f.$ Since $$\sum_{n \in \Z_+} ((f)\C_n - (g)\C_n)$$ is an even
integer, we can pick  $h_1, \dots, h_m$ so that $m$ is even as well, and hence $h' = h_1 \dots h_m \in
\Alt(\Omega).$ The statement then follows from Proposition~\ref{decomposition}.
\end{proof}

By Lemma~\ref{even.fin}, the converse of the above lemma is generally false. However, we shall prove (in Proposition~\ref{even.alt.2}) that it holds for maps having exactly one infinite cycle and finitely many $n$-cycles for each $n \in \Z_+$.

\begin{lemma}\label{even.alt.3}
Let $f \in \Inj(\Omega)$ be a map that satisfies $\, (f)\C_{\mathrm{open}} + (f)\C_{\mathrm{fwd}} = 1$ and $\, (f)\C_n < \aleph_0$ for all $n \in \Z_+$, and let $h \in \Fin(\Omega) \setminus \{1\}$ be a transposition. Then $$\sum_{n \in \Z_+} ((f)\C_n - (fh)\C_n) = \sum_{n \in \Z_+} ((f)\C_n - (hf)\C_n) \in \{-1,1\}.$$
\end{lemma}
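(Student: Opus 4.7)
The plan is to reduce the statement to a case analysis based on the positions of $\alpha$ and $\beta$ in the cycle decomposition of $f$, where $h = (\alpha\,\beta)$ is the given transposition.

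First, I would establish the equality of the two sums. Since $hf = h(fh)h^{-1}$, the elements $fh$ and $hf$ are conjugate in $\Inj(\Omega)$, so Proposition~\ref{decomposition} yields $(hf)\C_n = (fh)\C_n$ for every $n \in \Z_+$. Hence it suffices to prove $\sum_{n \in \Z_+}((f)\C_n - (fh)\C_n) \in \{-1,1\}$.

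Next, I would observe that $f$ and $fh$ agree on $\Omega$ except at the (at most two) preimages under $f$ of $\alpha$ and $\beta$. Consequently only those $f$-cycles that meet $\{\alpha,\beta\}$ can differ from the corresponding cycles of $fh$, so the sum of interest involves only finitely many nonzero terms. Moreover, Lemma~\ref{fin.equiv.0.5} gives $(fh)\C_{\mathrm{open}} = (f)\C_{\mathrm{open}}$, and Lemmas~\ref{coimage} and~\ref{fwd cycles} together give $(fh)\C_{\mathrm{fwd}} = (f)\C_{\mathrm{fwd}}$. Therefore $fh$ also has exactly one infinite cycle, of the same type as the infinite cycle of $f$, and all of the change between $f$ and $fh$ is concentrated in the finite-cycle counts.

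The remaining analysis splits on whether $\alpha$ and $\beta$ lie in the same $f$-cycle. In the same-cycle case I would trace the orbits of $fh$ directly: if the common cycle is the unique infinite one, indexing it as $\{\alpha_i\}$ with $\alpha=\alpha_i$ and $\beta=\alpha_j$ ($i<j$) shows that it breaks into a finite cycle on $\{\alpha_i,\ldots,\alpha_{j-1}\}$ together with a single infinite cycle on what remains; if the common cycle is finite of length $k$, it splits into two finite cycles of lengths $m$ and $k-m$. In either sub-case $fh$ has exactly one more finite cycle than $f$, so the sum equals $-1$. In the different-cycle case one traces the orbit that now connects the two cycles and sees they fuse into a single cycle of $fh$: either a single infinite cycle (when one of the two was infinite), or a single finite cycle of length $k+l$ (when both were finite of lengths $k,l$). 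Here $fh$ has exactly one fewer finite cycle than $f$, so the sum equals $+1$.

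The main obstacle is the bookkeeping at the boundary of the ``split'' scenario---most delicately, when the common infinite cycle is a forward cycle and $\alpha$ happens to be its head (the unique element of $\Omega\setminus(\Omega)f$), one must verify that the carved-off piece really is a finite cycle and that the remaining orbit is again a single forward cycle, not a second one. The preservation of the forward- and open-cycle counts noted above rules out such degeneracies, so that the change in the finite-cycle total is always exactly $\pm 1$.
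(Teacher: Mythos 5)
Your proposal is correct and follows essentially the same route as the paper: reduce to the sum for $fh$ via conjugacy, then case-split on whether the two transposed points lie in the same $f$-cycle or in different ones, checking that a split adds exactly one finite cycle (sum $=-1$) and a merge removes exactly one (sum $=+1$). The paper organizes the same four cases by cycle type (infinite/finite) rather than same/different, but the content, including the observation that the open and forward cycle counts are unchanged, is identical.
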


\begin{proof}
As noted before, $fh$ and $hf$ must have equivalent cycle
decompositions, and hence the two sums above must always be equal.
Thus it suffices to prove that the former, which we shall denote
by $A$ from now on, is either $-1$ or $1$. Let $\Sigma \subseteq
\Omega$ be the infinite cycle in the cycle decomposition of $f.$ Then we can write $\Sigma = \{\alpha_i\}_{i \in I}$, where $I$ is either $\Z$ or $\Z_+$, and $(\alpha_i)f = \alpha_{i+1}$ for all $i \in I$. We shall consider a number of different cases.

Suppose that $h$ interchanges some $\alpha_i$ and $\alpha_{i+n}$
($n > 0$). Then $\{\alpha_i, \alpha_{i+1}, \dots, \alpha_{i+n-1}\}$ is a finite cycle and $\Sigma \setminus \{\alpha_i, \alpha_{i+1}, \dots, \alpha_{i+n-1}\}$ is an infinite cycle under $fh.$ Thus $f$ and $fh$ have equal numbers of all types of cycles, except $fh$ has one additional cycle of
cardinality $n$ (given that $(f)\C_n < \aleph_0$). Therefore $A = -1.$

Next, suppose that $\Gamma = \{\beta_0, \dots, \beta_{m-1}\}$ is a
finite cycle under $f$ (with $m > 1$), where $(\beta_i)f = \beta_{i+1} \ (\mathrm{mod} \ m)$, and that $h$ interchanges
$\beta_0$ and $\beta_j$ $(0 < j \leq m-1)$. Then $f$ and $fh$ have
the same cycles, except in place of $\{\beta_0, \dots, \beta_{m-1}\},$
$fh$ has the two cycles $\{\beta_0, \dots, \beta_{j-1}\}$ and
$\{\beta_j, \dots, \beta_{m-1}\}.$ Hence, compared to $f,$ $fh$ has one fewer cycle of cardinality $m,$ one more cycle of cardinality $j$, and one more cycle of cardinality $m-j$. Therefore $A = -1.$

Now, let $\Sigma$ and $\Gamma$ be as before (though now $m$ is
allowed to be $1$), and suppose that $h$ interchanges $\beta_0$
and some $\alpha_i.$ Then $\Sigma \cup \Gamma$ is a cycle under
$fh,$ but otherwise $fh$ has the same cycles as $f.$ Hence $fh$
has one fewer cycle of cardinality $m$ than $f,$ and therefore $A
= 1.$

Finally, suppose that $\Gamma = \{\beta_0, \dots, \beta_{m-1}\}$ and
$\Delta = \{\delta_0, \dots, \delta_{n-1}\}$ are distinct finite
cycles under $f$ (where $m,n \geq 1$, $(\beta_i)f = \beta_{i+1} \ (\mathrm{mod} \ m)$, and $(\delta_i)f = \delta_{i+1} \ (\mathrm{mod} \ n)$), and that $h$ interchanges $\beta_0$ and $\delta_0$. Then $\Gamma \cup \Delta$ is a cycle under $fh,$ but otherwise $fh$ has the same cycles as $f.$ Hence, compared to $f,$ $fh$ has one fewer cycle of cardinality $m,$ one
fewer cycle of cardinality $n,$ and one more cycle of cardinality
$m+n.$ Therefore $A = 1.$

In all cases the sum $A$ is either $1$ or $-1$, as claimed.
\end{proof}

\begin{lemma}
Let $f \in \Inj(\Omega)$ be a map that satisfies $\, (f)\C_{\mathrm{open}} + (f)\C_{\mathrm{fwd}} = 1$ and $\, (f)\C_n < \aleph_0$ for all $n \in \Z_+$, and let $h \in \Alt(\Omega)$ be any map. Then $fh \approx_{\mathrm{even}} f \approx_{\mathrm{even}} hf.$
\end{lemma}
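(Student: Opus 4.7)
The plan is to exploit the fact that an element of $\Alt(\Omega)$ is a product of an even number of transpositions, and then apply Lemma~\ref{even.alt.3} termwise, tracking how the sum $\sum_n(\cdot)\C_n$ changes.

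If $h = 1$ there is nothing to prove, so assume $h \neq 1$ and write $h = h_1 h_2 \cdots h_{2k}$ as a product of nontrivial transpositions in $\Fin(\Omega)$. Set $f_0 = f$ and $f_i = f h_1 \cdots h_i$ for $1 \leq i \leq 2k$, so $f_{2k} = fh$. The first task is to check that each $f_i$ still satisfies the hypotheses of Lemma~\ref{even.alt.3}. By Lemma~\ref{fin.equiv.0.5}, $(f_i)\C_{\mathrm{open}} = (f)\C_{\mathrm{open}}$, and by Lemma~\ref{coimage} and Lemma~\ref{fwd cycles}, $(f_i)\C_{\mathrm{fwd}} = |\Omega \setminus (\Omega)f_i| = |\Omega \setminus (\Omega)f| = (f)\C_{\mathrm{fwd}}$, so $(f_i)\C_{\mathrm{open}} + (f_i)\C_{\mathrm{fwd}} = 1$ is preserved. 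To see that $(f_i)\C_n < \aleph_0$ is preserved, an inductive step reduces to showing that passing from $f_{i-1}$ to $f_i$ changes only finitely many cycles: since $f_{i-1}$ and $f_i$ disagree only at the $f_{i-1}$-preimages of the two points swapped by $h_i$ (at most two elements), the case analysis in the proof of Lemma~\ref{even.alt.3} shows that the cycle decompositions differ in only a bounded (at most two) number of cycles, so all $n$-cycle counts remain finite.

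Now apply Lemma~\ref{even.alt.3} to each step: for each $1 \leq i \leq 2k$, the integer
\[ A_i \;=\; \sum_{n \in \Z_+} \bigl((f_{i-1})\C_n - (f_i)\C_n\bigr) \]
lies in $\{-1,1\}$, and is a sum of only finitely many nonzero terms. Telescoping (legitimate because the union over $i$ of the finite support sets is still finite), I obtain
\[ \sum_{n \in \Z_+} \bigl((f)\C_n - (fh)\C_n\bigr) \;=\; \sum_{i=1}^{2k} A_i, \]
which is a sum of $2k$ values each equal to $\pm 1$, hence an even integer. Combined with the fact that $f \approx_{\mathrm{fin}} fh$ (which is immediate from Corollary~\ref{fin.equiv.1} with trivial conjugating element), this gives $f \approx_{\mathrm{even}} fh$.

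For the other half, $f \approx_{\mathrm{even}} hf$, I would avoid redoing the argument and instead observe that $hf = h^{-1}(hf)h \cdot$... more precisely, $fh = h^{-1}(hf)h$, so by Proposition~\ref{decomposition} the maps $fh$ and $hf$ have equivalent cycle decompositions. In particular $(fh)\C_n = (hf)\C_n$ for all $n$ and their open/forward cycle counts agree, so $fh \approx_{\mathrm{even}} hf$; transitivity then yields $f \approx_{\mathrm{even}} hf$. The main obstacle is the first step: one must verify that the stated hypotheses on $f$ (one infinite cycle, finitely many $n$-cycles per $n$) are preserved under composition with a single transposition, so that the iteration of Lemma~\ref{even.alt.3} is actually legal; everything else is a clean parity and telescoping argument.
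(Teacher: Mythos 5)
Your proof is correct and follows essentially the same route as the paper: decompose $h$ into an even number of transpositions, apply Lemma~\ref{even.alt.3} at each step, and conclude by parity, handling $hf$ via the conjugacy $fh = h^{-1}(hf)h$. If anything, you are more careful than the paper in explicitly checking that each intermediate map $f_i$ still satisfies the hypotheses of Lemma~\ref{even.alt.3}, a point the paper's two-transposition reduction leaves implicit.
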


\begin{proof}
We shall only prove the first equivalence. By Lemma~\ref{even.equiv.rel}, it is enough to show this in the case where $h = h_1h_2$ for some transpositions $h_1$ and $h_2.$ By Proposition~\ref{fin.equiv.main}, $f \approx_{\mathrm{fin}} fh.$ Let $I \subseteq \Z_+$ be a finite set such that for all $n \in \Z_+ \setminus I,$ $(f)\C_n = (fh_1)\C_n = (fh)\C_n.$
Modulo $2,$ we have $$\sum_{n \in I} ((f)\C_n - (fh_1)\C_n) \equiv 1 \equiv \sum_{n \in I} ((fh_1)\C_n - (fh_1h_2)\C_n),$$ by Lemma~\ref{even.alt.3}. Hence, $$\sum_{n \in \Z_+} ((f)\C_n - (fh)\C_n) = \sum_{n \in I} ((f)\C_n - (fh)\C_n) \equiv \sum_{n \in I} (f)\C_n - (\sum_{n \in I} (fh_1)\C_n - 1) \equiv 0.$$ Thus, $fh \approx_{\mathrm{even}} f$, by Definition~\ref{even.equiv.def}.
\end{proof}

Combining the last three lemmas, we obtain a description of $\approx_{\mathrm{even}}$ in terms of composition with members of $\Alt(\Omega)$, for elements of $\Inj(\Omega)$ that do not satisfy the hypotheses of Lemma~\ref{even.fin}.

\begin{proposition}\label{even.alt.2}
Let $f,g \in \Inj(\Omega)$ be any two maps, each having exactly one infinite cycle in its cycle decomposition, and satisfying $\, (f)\C_n, (g)\C_n < \aleph_0$ for all $n \in \Z_+$. Then $f \approx_{\mathrm{even}} g$ if and only if $f = hh_1gh_2h^{-1}$ for some $h \in \Sym(\Omega)$ and $h_1, h_2 \in \Alt(\Omega).$
\end{proposition}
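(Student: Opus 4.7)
The plan is to derive both directions of the equivalence directly from the three preceding lemmas, so most of the work has already been done; this proposition is mainly a bookkeeping exercise that packages them together.

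For the forward direction, suppose $f \approx_{\mathrm{even}} g$. Lemma~\ref{even.alt.1} hands us a permutation $h \in \Sym(\Omega)$ and an element $h' \in \Alt(\Omega)$ with $f = hgh'h^{-1}$. Taking $h_1 = 1 \in \Alt(\Omega)$ and $h_2 = h'$ yields the desired factorization $f = hh_1gh_2h^{-1}$, so this direction is essentially immediate.

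For the reverse direction, assume $f = hh_1gh_2h^{-1}$ with $h \in \Sym(\Omega)$ and $h_1, h_2 \in \Alt(\Omega)$. Since conjugate maps have equivalent cycle decompositions (Proposition~\ref{decomposition}), $f$ and $h_1gh_2$ agree on every cycle statistic, and in particular $f \approx_{\mathrm{even}} h_1gh_2$. By Lemma~\ref{even.equiv.rel} it therefore suffices to show $g \approx_{\mathrm{even}} h_1gh_2$, and the natural way to do this is a two-step chain:
\[
g \;\approx_{\mathrm{even}}\; gh_2 \;\approx_{\mathrm{even}}\; h_1gh_2.
\]
The first equivalence comes from applying the unlabeled lemma just before the proposition (with map $g$ and element $h_2 \in \Alt(\Omega)$); this is legal because by hypothesis $g$ has exactly one infinite cycle and $(g)\C_n < \aleph_0$ for every $n \in \Z_+$.

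The second equivalence requires a small check: to apply the same lemma again, now with map $gh_2$ and element $h_1$, we must verify that $gh_2$ also has exactly one infinite cycle and finite $\C_n$ for every $n$. This will follow from $g \approx_{\mathrm{even}} gh_2$, because that equivalence forces $(gh_2)\C_{\mathrm{open}} = (g)\C_{\mathrm{open}}$, $(gh_2)\C_{\mathrm{fwd}} = (g)\C_{\mathrm{fwd}}$, and (by conditions (iii) and (iv) of Definition~\ref{fin.equiv.def}) $(gh_2)\C_n$ differs from $(g)\C_n$ for only finitely many $n$ and is finite whenever it differs — so all $(gh_2)\C_n$ remain finite. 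Once this is checked, the lemma yields $gh_2 \approx_{\mathrm{even}} h_1 gh_2$, and transitivity closes the loop. No step should present real difficulty; the only mild obstacle is confirming that the finiteness hypotheses on $\C_n$ survive right-multiplication by $h_2$, which is where Definition~\ref{fin.equiv.def} does its work.
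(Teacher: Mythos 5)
Your proof is correct and follows essentially the same route as the paper: the forward direction is quoted from Lemma~\ref{even.alt.1}, and the converse reduces via Proposition~\ref{decomposition} to showing $h_1gh_2 \approx_{\mathrm{even}} g$ using the unlabeled lemma and transitivity. Your explicit check that $gh_2$ still satisfies the hypotheses of that lemma (via conditions (i)--(iv) of Definition~\ref{fin.equiv.def}) is a detail the paper leaves implicit, and it is verified correctly.
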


\begin{proof}
The forward implication was proved in Lemma~\ref{even.alt.1}. For the converse, let us suppose that $f = hh_1gh_2h^{-1}$ for some $h \in
\Sym(\Omega)$ and $h_1, h_2 \in \Alt(\Omega).$ By
Proposition~\ref{decomposition}, it suffices to show that $h_1gh_2
\approx_{\mathrm{even}} g.$ But, this follows from the previous
lemma (and Lemma~\ref{even.equiv.rel}).
\end{proof}

We are, at last, in a position to describe the normal submonoids of
$\Inj(\Omega)$ that have $\Alt(\Omega)$ as the group of units.

\begin{proposition}\label{alt.classif}
Let $M \subseteq \Inj(\Omega)$ be any submonoid such that $M \cap
\Sym(\Omega) = \Alt(\Omega).$ Then $M$ is normal if and only if either $M = \Alt(\Omega) \cup M_{\mathrm{fin}}$ or $M = \Alt(\Omega) \cup M_{\mathrm{fin}} \cup \Inj_{\infty}(\Omega)$, where $$M_{\mathrm{fin}} = B \cup \{f \in \Inj(\Omega) : |\Omega \setminus (\Omega)f| \in N \setminus (\Gen(N) \cup \{0\})\},$$ for some additive submonoid $N \subseteq \N$ and subset $B \subseteq \Inj_{\mathrm{fin}}(\Omega)$ that satisfies the following conditions:
\begin{enumerate}
\item[$(${\rm i}$)$] $B_\N = \Gen(N);$
\item[$(${\rm ii}$)$] $\{f \in B : (f)\C_{\mathrm{open}} + (f)\C_{\mathrm{fwd}} \geq 2\}$ and $\, \{f \in B : \exists n \in \Z_+ \, ((f)\C_n = \aleph_0)\}$ are $\, \approx_{\mathrm{fin}}$-closed;
\item[$(${\rm iii}$)$] $\{f \in B : (f)\C_{\mathrm{open}} + (f)\C_{\mathrm{fwd}} = 1 \text{ and } \, \forall n \in \Z_+ \, ((f)\C_n < \aleph_0)\}$ is $\, \approx_{\mathrm{even}}$-closed.
\end{enumerate}
\end{proposition}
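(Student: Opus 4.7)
The plan is to mirror the strategy of Proposition~\ref{fin.classif}, splitting the argument according to whether a given element of $B$ satisfies the hypotheses of Lemma~\ref{even.fin} (the subcases covered by condition~(ii)) or not (the subcase covered by condition~(iii), where Proposition~\ref{even.alt.2} must take over). Every element of $B \subseteq \Inj_{\mathrm{fin}}(\Omega)$ has $(f)\C_{\mathrm{fwd}} = |\Omega \setminus (\Omega)f| \geq 1$, so the three subsets of $B$ named in (ii) and (iii) together cover $B$, which makes this split exhaustive.

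For the forward direction, assume $M$ is normal and set $N = M_\N$ and $B = \{f \in M : |\Omega \setminus (\Omega)f| \in \Gen(N)\}$; condition~(i) is immediate. Any $h \in \Inj(\Omega)$ with $|\Omega \setminus (\Omega)h| \in N \setminus (\Gen(N) \cup \{0\})$ can be written as a sum $n_1 + n_2$ of two nonzero elements of $N$, and picking $f_1, f_2 \in M$ realizing these cokernel sizes, Theorem~\ref{composition} yields $h = af_1a^{-1}bf_2b^{-1}$ for some $a, b \in \Sym(\Omega)$, whence $h \in M$ by normality; combined with Corollary~\ref{inf.coimage.monoid} for $M_\infty$, this gives the claimed description of $M$. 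To verify condition~(ii), fix $f \in B$ satisfying a hypothesis of Lemma~\ref{even.fin} and any $g \approx_{\mathrm{fin}} f$; then $g$ satisfies the same hypothesis, and Proposition~\ref{fin.equiv.main} writes $g = hh_1fh_2h^{-1}$ with $h_1, h_2 \in \Fin(\Omega)$. By Corollary~\ref{fin.equiv.1} the intermediate products $h_1f$ and $h_1fh_2$ are $\approx_{\mathrm{fin}}$-equivalent to $f$ and so still satisfy the Lemma~\ref{even.fin} hypothesis; Lemma~\ref{even.fin} itself then lets me replace $h_1$, then $h_2$, by suitable elements of $\Alt(\Omega) \subseteq M$ without disturbing the cycle decomposition of the product. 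Hence $g$ is conjugate to an element of $M$, so $g \in M$, and in fact $g \in B$ since $|\Omega \setminus (\Omega)g| = |\Omega \setminus (\Omega)f|$. Condition~(iii) is argued identically except that Proposition~\ref{even.alt.2} supplies $h_1, h_2 \in \Alt(\Omega)$ directly, so no replacement step is needed.

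For the reverse direction, closure of $M$ under composition is routine for products lying entirely within $M_{\mathrm{fin}} \cup \Inj_\infty(\Omega)$, using Lemma~\ref{coimage} and the fact that, by minimality of $\Gen(N)$, no sum of two nonzero elements of $N$ lies in $\Gen(N)$. The essential new step is a product $fg$ with $f \in \Alt(\Omega)$ and $g \in B$: Corollary~\ref{fin.equiv.1} gives $fg \approx_{\mathrm{fin}} g$, so if $g$ satisfies a hypothesis of Lemma~\ref{even.fin}, the $\approx_{\mathrm{fin}}$-closure part of condition~(ii) places $fg$ in $B$; otherwise $g$ lies in the subset described by (iii), $fg$ inherits the same structural constraints, and Proposition~\ref{even.alt.2} applied with $h = h_2 = 1$ and $h_1 = f$ shows $fg \approx_{\mathrm{even}} g$, so condition~(iii) places $fg$ in $B$. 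The symmetric product $gf$ is handled identically. Normality of $M$ follows at once from Proposition~\ref{decomposition}: for $a \in \Sym(\Omega)$ and $h \in M$, the map $aha^{-1}$ has a cycle decomposition equivalent to that of $h$ and is therefore both $\approx_{\mathrm{fin}}$- and $\approx_{\mathrm{even}}$-equivalent to $h$, which suffices in view of conditions~(ii) and~(iii).

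The main obstacle is the case bookkeeping in the reverse direction: one has to verify not only that the three subsets of $B$ named in (ii) and (iii) cover $B$, but also that multiplication by an element of $\Alt(\Omega)$ preserves membership in each. The $\approx_{\mathrm{fin}}$- and $\approx_{\mathrm{even}}$-invariance of these subsets, guaranteed by the closure conditions, is precisely what makes this check go through, and the dichotomy between Lemma~\ref{even.fin} (where $\Alt(\Omega)$-multiplication is as flexible as $\Fin(\Omega)$-multiplication up to conjugation) and Proposition~\ref{even.alt.2} (where the parity invariant genuinely intervenes) is what forces the two-part form of conditions (ii) and (iii).
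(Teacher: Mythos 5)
Your proposal is correct and follows essentially the same route as the paper's proof: the same choice $N = M_\N$, $B = \{f \in M : |\Omega\setminus(\Omega)f| \in \Gen(N)\}$ in the forward direction, with Theorem~\ref{composition} and Corollary~\ref{inf.coimage.monoid} handling the non-generator and infinite-coimage parts, and with Proposition~\ref{fin.equiv.main} plus Lemma~\ref{even.fin} justifying condition (ii) and Proposition~\ref{even.alt.2} justifying condition (iii), then the reverse direction by checking closure under composition and conjugation piece by piece. You simply make explicit several steps the paper leaves as "by the usual argument" (the covering of $B$ by the three subsets, the replacement of finitary factors by alternating ones up to conjugacy, and the non-generator fact about $\Gen(N)$), all of which check out.
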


\begin{proof}
Again, this proof is very similar to those of Propositions~\ref{gp=fin} and~\ref{fin.classif}.

Suppose that $M$ is normal. Let $N = M_\N,$ and set $$B = \{f \in M : |\Omega \setminus (\Omega)f| \in \Gen(N)\}.$$ Then $B$ clearly satisfies (i); it satisfies (ii), by Proposition~\ref{fin.equiv.main} and Lemma~\ref{even.fin}; and it satisfies (iii), by Proposition~\ref{even.alt.2}. Further, if $h \in \Inj(\Omega)$ is any element such that $|\Omega \setminus (\Omega)h| \in M_\N \setminus (\Gen(M_\N) \cup \{0\}),$ then $h \in M,$ by Theorem~\ref{composition}. Hence, $$B \cup \{f \in \Inj(\Omega) : |\Omega \setminus (\Omega)f| \in N \setminus (\Gen(N) \cup \{0\})\} = M \cap \Inj_{\mathrm{fin}} (\Omega).$$ The desired conclusion then follows from Corollary~\ref{inf.coimage.monoid}.

For the converse, suppose that $M_{\mathrm{fin}}$ has the form specified in the statement. By Propositions~\ref{fin.equiv.main} and~\ref{even.alt.2}, $B$ is normal and is closed under multiplication by elements of $\Alt(\Omega)$. Since $\Alt(\Omega),$ $$\{f \in \Inj(\Omega) : |\Omega \setminus (\Omega)f| \in N \setminus (\Gen(N) \cup \{0\})\},$$ and $\Inj_{\infty}(\Omega)$ are also normal, it follows that $M$ is as well. By the usual argument, it is easy to see that $M$ must be a submonoid.
\end{proof}

\section{Main result} \label{main.section}

Putting together the remarks made in Section~\ref{gen.sect} with Propositions~\ref{sym.classif}, \ref{gp=fin}, \ref{fin.classif} and \ref{alt.classif}, we obtain a classification of all the normal submonoids of $\Inj(\Omega)$. Some of the conditions are phrased differently in the theorem below than in the aforemetioned propositions, in order to make the statement more self-contained.

\begin{theorem}
Let $M \subseteq \Inj(\Omega)$ be a normal submonoid. Then $M = M_{\mathrm{gp}} \cup M_{\mathrm{fin}} \cup M_{\infty},$ where
\begin{enumerate}
\item[$(1)$] $M_{\mathrm{gp}} \in \{\{1\}, \Alt(\Omega), \Fin(\Omega), \Sym(\Omega)\};$  
\item[$(2)$] $$M_{\mathrm{fin}} = B \cup \{f \in \Inj(\Omega) : |\Omega \setminus (\Omega)f| \in N \setminus (\Gen(N) \cup \{0\})\}$$ for some additive submonoid $N \subseteq \N$, with minimal generating set $\, \Gen(N)$, and normal subset $B \subseteq \Inj(\Omega)$ that satisfies
$$\{|\Omega \setminus (\Omega)f| : f \in B\} = \Gen(N);$$
\item[$(3)$] $M_{\infty} \in \{\emptyset, \Inj_{\infty}(\Omega)\}$.
\end{enumerate}
If $M_{\mathrm{gp}} \neq \{1\}$, then $B$ must satisfy additional hypotheses, as follows.

If $M_{\mathrm{gp}} = \Alt(\Omega),$ then
\begin{enumerate}
\item[$(${\rm i}$)$] the subset of $B$ consisting of maps having at least two infinite cycles or infinitely many cycles of a particular finite cardinality in their cycle decompositions is $\, \approx_{\mathrm{fin}}$-closed $($see Definition~\ref{fin.equiv.def} for the notation $\, \approx_{\mathrm{fin}}$$);$
\item[$(${\rm ii}$)$] the subset of $B$ consisting of maps having exactly one infinite cycle and finitely many cycles of each finite cardinality in their cycle decompositions is $\, \approx_{\mathrm{even}}$-closed $($see Definition~\ref{even.equiv.def} for the notation $\, \approx_{\mathrm{even}}$$)$.
\end{enumerate}

If $M_{\mathrm{gp}} = \Fin(\Omega),$ then $B$ is $\, \approx_{\mathrm{fin}}$-closed.

If $M_{\mathrm{gp}} = \Sym(\Omega),$ then $$B = \{f \in \Inj(\Omega) :
|\Omega \setminus (\Omega)f| \in \Gen(N)\}.$$

Conversely, if $M = M_{\mathrm{gp}} \cup M_{\mathrm{fin}} \cup M_{\infty},$ where $M_{\mathrm{gp}}$ satisfies $(1)$, $M_{\mathrm{fin}}$ satisfies $(2)$, $M_{\infty}$ satisfies $(3)$, and $B$ satisfies the appropriate conditions above $($depending on the form of $M_{\mathrm{gp}}$$)$, then $M$ is a normal submonoid of $\, \Inj(\Omega)$.
\end{theorem}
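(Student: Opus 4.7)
The plan is to assemble the preceding classification propositions together with the general observations of Section~\ref{gen.sect}. Given a normal submonoid $M \subseteq \Inj(\Omega)$, I partition it as $M = M_{\mathrm{gp}} \cup M_{\mathrm{fin}} \cup M_{\infty}$ according to whether $|\Omega \setminus (\Omega)f|$ equals $0$, is a positive integer, or equals $\aleph_0$. Conclusion $(1)$ is then immediate from Lemma~\ref{invert.elts.form.gp} together with Theorem~\ref{ulam}, and conclusion $(3)$ is Corollary~\ref{inf.coimage.monoid}.

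For conclusion $(2)$ together with the additional hypotheses on $B$, I split on the four possibilities for $M_{\mathrm{gp}}$ and quote the corresponding structural proposition: Proposition~\ref{gp=fin} when $M_{\mathrm{gp}} = \{1\}$, Proposition~\ref{alt.classif} when $M_{\mathrm{gp}} = \Alt(\Omega)$, Proposition~\ref{fin.classif} when $M_{\mathrm{gp}} = \Fin(\Omega)$, and Proposition~\ref{sym.classif} when $M_{\mathrm{gp}} = \Sym(\Omega)$. In each case one takes $N = M_\N$ and $B = \{f \in M : |\Omega \setminus (\Omega)f| \in \Gen(N)\}$, and the extra closure conditions are read off verbatim. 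The converse direction is equally mechanical, via the converse halves of those same four propositions; the normality of $B$ in conclusion $(2)$ itself is automatic in the $\Fin(\Omega)$ and $\Alt(\Omega)$ cases from the $\approx_{\mathrm{fin}}$- and $\approx_{\mathrm{even}}$-closure hypotheses (since conjugate maps are $\approx_{\mathrm{fin}}$-equivalent), immediate in the $\{1\}$ case, and forced in the $\Sym(\Omega)$ case by the explicit formula given for $B$.

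The only step that is not entirely formal is reconciling condition $(${\rm i}$)$ in the $\Alt(\Omega)$ case of the theorem — phrased there as the single $\approx_{\mathrm{fin}}$-closed subset of $B$ defined by ``$(f)\C_{\mathrm{open}} + (f)\C_{\mathrm{fwd}} \geq 2$ or $(f)\C_n = \aleph_0$ for some $n$'' — with the pair of separately $\approx_{\mathrm{fin}}$-closed subsets appearing in Proposition~\ref{alt.classif}$(${\rm ii}$)$. Because each of the two defining properties is itself invariant under $\approx_{\mathrm{fin}}$ (two $\approx_{\mathrm{fin}}$-equivalent maps share the same values of $\C_{\mathrm{open}}$ and $\C_{\mathrm{fwd}}$, and the same set of $n$ for which $\C_n = \aleph_0$), every $\approx_{\mathrm{fin}}$-class lies entirely inside or entirely outside each piece, from which one checks that the union is $\approx_{\mathrm{fin}}$-closed if and only if both pieces are. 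I anticipate no genuine obstacle in the proof; essentially all the work was done in the preceding sections, and what remains is simply bookkeeping to align notation and quantifiers with the earlier propositions.
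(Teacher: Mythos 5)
Your proposal is correct and follows essentially the same route as the paper, which itself gives no formal proof of this theorem beyond the sentence that it is obtained by ``putting together'' the remarks of Section~\ref{gen.sect} with Propositions~\ref{sym.classif}, \ref{gp=fin}, \ref{fin.classif} and~\ref{alt.classif}. Your extra check that the single set in condition (i) of the $\Alt(\Omega)$ case is $\approx_{\mathrm{fin}}$-closed exactly when the two sets of Proposition~\ref{alt.classif}(ii) are is the right (and correctly argued) piece of bookkeeping needed to reconcile the two phrasings.
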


\vspace{.1in}

\noindent
Department of Mathematics \newline
Ben Gurion University \newline
Beer Sheva, 84105 \newline
Israel \newline

\noindent Email: {\tt mesyan@bgu.ac.il}


\begin{thebibliography}{00}
\bibitem{Baer} R.\ Baer,
\textit{Die Kompositionsreihe der Gruppe aller eineindeutigen
Abbildungen einer unendlichen Menge auf sich,} Stud.\ Math.\
\textbf{5} (1934) 15--17.

\bibitem{Bertram1} Edward A.\ Bertram,
\textit{On a theorem of Schreier and Ulam for countable
permutations,} J.\ Algebra \textbf{24} (1973) 316--322.

\bibitem{CP} A.\ H.\ Clifford and G.\ B.\ Preston, 
\textit{Algebraic theory of semigroups, Vol.\ II,} Math.\ Surveys No.\ \textbf{7}, Amer.\ Math.\ Soc., Providence, R.\ I., 1967.

\bibitem{D&G} Manfred Droste and R\"{u}diger G\"{o}bel,
\textit{On a theorem of Baer, Schreier, and Ulam for permutations,}
J.\ Algebra \textbf{58} (1979) 282--290.

\bibitem{Gilmer} Robert Gilmer,
\textit{Commutative semigroup rings,} Univ.\ Chicago Press, Chicago,
1984.

\bibitem{LM} Diana Lindsey and Bernard Madison,
\textit{The lattice of congruences on a Baer-Levi semigroup,}  Semigroup Forum \textbf{12} (1976) 63--70.

\bibitem{ZM} Zachary Mesyan,
\textit{Conjugation of injections by permutations,} Semigroup Forum, to appear. (DOI: 10.1007/s00233-010-9224-3)

\bibitem{S&U} J.\ Schreier and S.\ Ulam,
\textit{\"{U}ber die Permutationsgruppe der nat\"{u}rlichen
Zahlenfolge,} Stud.\ Math.\ \textbf{4} (1933) 134--141.

\end{thebibliography}
\end{document}